\newtheorem{prop}{Proposition}
\newtheorem{example}{Example}
\newcommand{\qp}[1]{\ensuremath{\!\left({#1}\right)}}
\newcommand{\qc}[1]{\ensuremath{\!\left\{{#1}\right\}}}
\newcommand{\beatrice}[1]{{\color{olive} B: #1 }}
\begin{document}
\begin{frontmatter}
\title{A numerical implementation of the unified Fokas transform for evolution problems on a finite interval}

\author[a]{Emine Kesici}
\address[a]{Department of Mathematics, Istanbul Technical University, Maslak 34469,
         Istanbul,  Turkey.}
\author[b]{Beatrice Pelloni}
\address[b]{Department of Mathematics, Heriot-Watt University, Edinburgh EH14 4AS, UK.}
\author[c]{Tristan Pryer}
\address[c]{
  Department of Mathematics and Statistics,
  University of Reading,
  Whiteknights,
  PO Box 220,
  Reading RG6 6AX,
  UK.
}
\author[d]{David Smith}
\address[d]{
	Division of Science,
	Yale-NUS College,
	16 College Avenue West,
	\#01-220 138527,
	Singapore.
}

\begin{abstract}
  We present the numerical solution of two-point boundary value problems for a third order linear PDE, representing a linear evolution in one space dimension.  The difficulty of this problem is in the numerical imposition of the boundary conditions, and to our knowledge, no such computations exist. Instead of computing the evolution numerically, we evaluate the solution representation formula obtained by the unified transform, also known as Fokas transform.  This representation involves complex line integrals, but in order to evaluate these integrals numerically, it is necessary to deform the integration contours using appropriate deformation mappings. We formulate a strategy to implement effectively this deformation, which allows us to obtain accurate numerical results.
\end{abstract}
\begin{keyword}
  initial-boundary value problems \sep Airy equation \sep non-periodic problems \sep unified transform \sep Fokas transform
\end{keyword}
\end{frontmatter}

\section{Introduction}

In a series of papers over the last ten years, two of the authors, in collaboration with Fokas, have conducted an extensive analysis of boundary value  problems on a finite interval for linear evolution partial differential equations (PDEs) in one space variable \cite{FP2001a,FP2005a,Pel2004a,Pel2005a, Smi2011a}.  This analysis uncovered results  that are somewhat surprising, given the fact that the problem is a linear problem in one spatial variable. The novel ingredient that allowed a broader understanding of the structure of such boundary value problems is the approach known as the unified transform,  or Fokas transform. This approach, pioneered by Fokas and significantly extended during the past 15 years by a number of authors, gives a unified way to treat boundary value problems for linear and integrable nonlinear PDEs in two independent variables (for a general account and bibliography, see \cite{Fok2008a,fokas2014unified}). In its general form,  the transform and its inverse are rigorously obtained through the solution of a so-called Riemann-Hilbert problem, a classical problem in complex analysis \cite{AF1997a, itsAMS}. Indeed, this transform is at its heart  a complex variable approach. The shift in perspective from the use of the classical Fourier analysis approach, a real variable transform, to using a transform in the complex plane enabled a broader and deeper understanding of the structure of these boundary value problems.  Using the unified transform, it has been possible  to classify the particular boundary value problems, for  {\em linear} evolution PDEs in two variables, for which a series representation of the solution of the problem {\em does not exist} - problems that behave very differently from a 2-point or periodic boundary value problem for the prototypical evolution PDEs in one space dimension, such as the heat equation  \cite{PS2014a,FS2016a}.

This approach yields an explicit representation of the solution of the boundary value problem in the form of a complex contour integral.  This representation is more general than the classical series representation, and can be shown to be equivalent to it whenever such series representation exists - for example when the boundary conditions are periodic.

In a separate development, Olver \cite{Olv2010a} discovered,  by careful numerical investigation then confirmed by rigorous computations, that the solution of a periodic boundary value problem for linear evolution PDEs can display a phenomenon that he called {\em dispersive quantisation}. It is natural then to investigate whether this quantisation is only supported in a periodic setting, and in general how boundary conditions qualitatively affect the solution at all or at specific times.

We set therefore to try and investigate whether dispersive quantisation could occur for different boundary conditions. In particular, our aim was to investigate, numerically in the first instance, whether there is a qualitative difference between the solutions of the same PDE posed with boundary conditions that support the existence of a series solution representation, or with boundary conditions for which such a series solution representation does not exist.

To this end,  we consider boundary value problems for a specific third-order linear evolution PDE, sometimes known as  Airy's equation, defined on the finite interval $[0,L]$.
Namely, we consider the following class of initial, boundary value problems:
\begin{equation}\label{lkdv}
\begin{split}
  q_{t}(x,t)+q_{xxx}(x,t)&=0, \quad x\in[0,L], \quad t>0 \\
  q(x,0)&=q_{0}(x), \quad x\in[0,L]\\
  q(0,t)&=f_{0}(t), \quad t>0\\
  q(L,t)&=g_{0}(t), \quad t>0\\
  q_{x}(L,t)&=\alpha \, q_{x}(0,t), \quad t>0, \alpha\in\mathbb R,
\end{split}
\end{equation}
{where $q_0$, $f_0$ and $g_0$ are prescribed functions, compatible at the point $(x=0,L,t=0)$, the corners of the domain.}

Equation (\ref{lkdv}), or rather its close sibling $q_t+q_x+q_{xxx}=0,$ is the linearisation of the famous Korteweg-de Vries equation which models shallow water waves. Hence, this equation has significant importance from the point of view of applications. However, our motivation in studying this particular problem is that it is the simplest possible problem for which the phenomenon referred to above, namely the lack of a series representation for the solution, can manifest itself given specific boundary conditions.

The behaviour of the solution to \eqref{lkdv} depends essentially on the coupling constant $\alpha$. This is easily justified by a formal integration by parts. Indeed, to illustrate the effect of different values of $\alpha$ consider
\begin{eqnarray}\label{cons-quan}
\begin{split}
\frac{d}{dt} \|q\|_{2}^{2} &:= \frac{d}{dt} \int_{0}^{L} |q|^2 dx =-2 \int_{0}^{L} q \, q_{xxx} \, dx = -2 (q \, q_{xx})\bigg|_{x=0}^{x=L}+2\int_{0}^{L} q_{xx} \, q_{x} \, dx =(-2 q \, q_{xx} + q_{x}^2) \bigg|_{x=0}^{x=L} \\
                           &= 2 q(0,t)\,q_{xx}(0,t)- 2q(L,t)\,q_{xx}(L,t) - q_{x}^{2}(0,t) + q_{x}^{2}(L,t).
\end{split}
\end{eqnarray}
Imposing the boundary condition $q_{x}(L,t) = \alpha \, q_{x}(0,t)$, \eqref{cons-quan} simplifies to give:
\begin{equation*}
\frac{d}{dt} \|q\|_{2}^{2} = 2 q(0,t)\,q_{xx}(0,t)- 2q(L,t)\,q_{xx}(L,t) + (\alpha^2 -1) \, q_{x}^{2}(0,t).
\end{equation*}
When $f_0(t) \equiv 0$ and $g_0(t) \equiv 0$, then the energy is conserved in time if $|\alpha|=1$ and energy decreases in time when $|\alpha|<1$, that is the equation becomes dispersive.

In this paper, we study numerically the effect of varying $\alpha$ values. Our starting point is the integral representation of the solution given by the Fokas transform, and its numerical evaluation. A similar numerical approach was pioneered in \cite{flyerfokas2008}. In this paper, Fokas and Flyer evaluated the contour integral representation for the solution of specific boundary value problems for the linear KdV equation, but {\em only posed  on the half line}, i.e. in the limit as $L\to \infty$. In this case, the integrand is a function which is analytic with respect to the complex variable of integration.  It turns out that the problem of selecting a contour in an optimal way for the numerical evaluation of the relevant integral is a very delicate issue.  This is due to the fact that, unlike the case when the problem is posed on a half line, in the case of a problem posed on a finite interval the integrand is no more analytic, but rather meromorphic, and the evaluation of the residues at the poles may be the dominant contribution to the computation.

In this paper, we concentrate on devising a strategy for this numerical evaluation. This strategy will then be put to use to investigate the original quantisation question in subsequent work.

The organisation of the paper is as follows: In section \ref{section2}, we summarise the unified transform method, and discuss the integral representation of the solution of problem \ref{lkdv}. In section \ref{section3}, the numerical strategy for the finite interval case will be given by providing comparison with half-line case. In section \ref{section4}, some numerical examples on finite interval with the different coupling constant $\alpha \in [0,1]$ will be presented. In conclusion, we discuss the numerical results and indicate future directions of investigation.
\section{The integral representation of the solution and the global relation}\label{section2}
The unified transform is a general methodology for studying boundary value problems for linear and integrable nonlinear PDEs in two independent variables. The particular case of the equation (\ref{lkdv}) considered in the present paper has been studied  in detail in \cite{Pel2005a, Smi2011a}.
We  refer to these works for more details, and limit ourselves to a brief summary containing the main ideas of the method.

The starting point of the analysis is an alternative equivalent formulation of the PDE. In the nonlinear integrable case, that was the original motivation for this development, this  is know as a Lax pair formulation. In the linear evolution case, it is straightforward to verify that any PDE of the general form
\begin{equation}
\partial_tq+w(-i\partial_x)q=0,\quad
\label{genpde}
\end{equation}
 is equivalent to  the differential formulation
\begin{equation}\label{Greenfunc}
(e^{-ikx+w(k)t} \,q)_{t} -(e^{ikx+w(k)t} \, X)_{x} =0,\qquad \forall k\in\mathbb C.
\end{equation}
From this starting point, one can deduce two consequences:
\begin{itemize}
\item
A constraint involving certain transforms of all initial and boundary values in terms of a spectral parameter, $k$, is valid for $k\in\mathbb C$.  This constraint is usually called the {\em global relation}, and it is the key ingredient of the method.
\item
A complex contour  integral representation for the function $q(x,t)$ in terms of all initial and boundary values. This obtained by a formal Fourier inversion  and a contour deformation in the $k$ complex plane.

\end{itemize}
Rather then prove this methodology in general, we describe in brief detail how it is implemented for the particular case under consideration, namely equation (\ref{lkdv}), which corresponds to
$$
w(k)=-ik^3,\qquad X(x,t,k)=k^2 q -ikq_{x}-q_{xx},
$$
in formulations (\ref{genpde}) and (\ref{Greenfunc}). We stress that the same approach works for any PDE of the form (\ref{genpde}).

We apply Green's theorem to the differential form \eqref{Greenfunc} in the convex domain $[0,L]\times [0,t]$ to obtain the global relation
\begin{equation}
e^{w(k)t}\int_{0}^{L}e^{-ikx}q(x,t)dx =\int_{0}^{t} e^{-ikL+w(k)s}X(L,s,k)ds -\int_{0}^{t}e^{w(k)s}X(0,s,k)ds +\int_{0}^{L}e^{-ikx}q_{0}(x)dx.
\label{gengr}
\end{equation}
Define the following {\em spectral functions}, which are functions of the complex variable $k$:
\begin{eqnarray*}
  \label{eq:qhat}
  \widehat{q}(k,t)&=&
  \int_{0}^{L}e^{-ikx}q(x,t)dx, \qquad
  \widehat{q_{0}}(k)
  =\int_{0}^{L}e^{-ikx}q_{0}(x)dx\\
\widetilde{f}(k,t)&=&
\int_{0}^{t}e^{w(k)s}X(0,s,k) ds=:k^2 \widetilde{f_{0}}(k,t)-i k \widetilde{f_{1}}(k,t)-\widetilde{f_{2}}(k,t),\\
\widetilde{g}(k,t)&=&
\int_{0}^{t}e^{-ikL+w(k)s}X(L,s,k)ds=:k^2 \widetilde{g_{0}}(k,t)-i k \widetilde{g_{1}}(k,t)-\widetilde{g_{2}}(k,t),
\end{eqnarray*}
where we have defined $\widetilde{f_{0}},\widetilde{f_{1}},\widetilde{f_{2}}$ and $\widetilde{g_{0}},\widetilde{g_{1}},\widetilde{g_{2}}$ through the definition of $X$ and powers of $k$. The global relation (\ref{gengr}) can then be written as
\begin{equation}\label{globalrel}
\widetilde{f}(k,t)-e^{-ikL}\widetilde{g}(k,t)=\widehat{q_{0}}(k)-e^{w(k)t}\widehat{q}(k,t).
\end{equation}
The solution representation is found by applying inverse Fourier transforms to \eqref{globalrel}, and deforming contours  - a procedure rigorously justified  in this case by an application of Jordan's lemma. We then obtain
\begin{equation}\label{integralsoln}
q(x,t)=\frac{1}{2\pi}\qc{\int_{\Re} e^{ikx+ik^3t} \, \widehat{q_{0}}(k) \, dk - \int_{\partial D^{+}} e^{ikx+ik^3t} \, \widetilde{f}(k,t) \, dk  - \int_{\partial D^{-}} e^{ik(x-L)+ik^3t} \, \widetilde{g}(k,t) \, dk},
\end{equation}
where $D^{\pm}=\{k \in \mathbb{C}^{\pm}: \Re(w(k))\leq 0\}$ and the orientation of the integration path is such that the interior of the domain remains on the left, see figure \ref{fig:fig1} for an illustration.
\begin{figure}[H]
\centering
\caption{\label{fig:fig1} {The regions $D^{\pm}=\{k \in \mathbb{C}^{\pm}: \Re(w(k))\leq 0\}$ where $D^{-}=D_{1}^{-}\cup D_{2}^{-}$.}}
\includegraphics{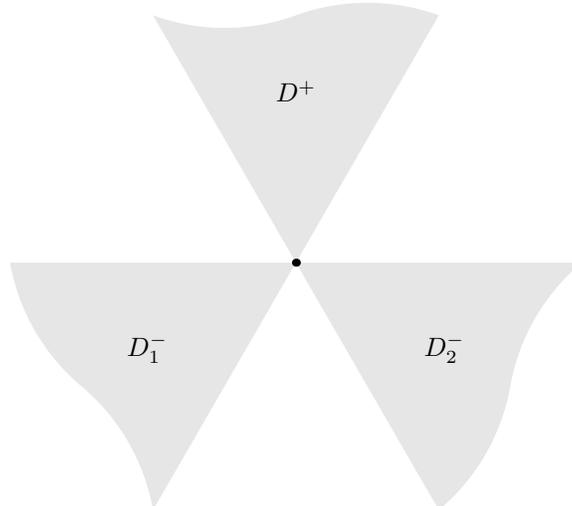}
\end{figure}

At this point we have a complex contour representation of the solution $q(x,t)$ that still depends on all the boundary values, both known and unknown. Hence this representation is not yet explicit. The crucial step to arrive to an explicit representation in terms only of the prescribed data of the problem lies in the analysis of the global relation. Namely, we seek an expression for the integrand in (\ref{integralsoln}) depending only the prescribed initial and  boundary conditions.

\subsection{Analysis of the global relation} \label{ssec:AnalysisGR}
Observe that  the spectral functions $\widehat{q_{0}}$, $\widetilde{f_{0}}$, $\widetilde{g_{0}}$ and $\widetilde{g_{1}}=\alpha \widetilde{f_{1}}$ are obtained as transforms of the prescribed data,  and are therefore known. However the three functions  $\widetilde{f_{1}}$, $\widetilde{f_{2}}$ and $\widetilde{g_{2}}$ cannot be computed from the given data.

Since we have three unknown functions, and only one equation, we seek additional algebraic equations.
The key is to exploit the invariance properties of the spectral functions in the complex $k$ plane. Observe that the functions $\widetilde f_i$, $i=1,2,3$ are functions of $k$ only through $w(k)=-ik^3$. Since $w(k)$ is invariant under rotation by $2\pi/3$, the substitutions $k\rightarrow \tau k$ and $k\rightarrow \tau^2 k$ leave these functions invariant for $\tau=\exp(2\pi i/3)$.
Therefore, evaluating the global relation at $\tau k$ and $\tau^2 k$, we find a system of three equations with three unknowns:
\begin{eqnarray}\label{system}
(k^2\widetilde{f_{0}}-ik\widetilde{f_{1}}-\widetilde{f_{2}})-e^{-ikL}(k^2\widetilde{g_{0}}-ik\widetilde{g_{1}}-\widetilde{g_{2}})&=&\widehat{q_{0}}(k)-e^{-ik^3t}\,\widehat{q}(k,t) \nonumber \\
(\tau^2 k^2\widetilde{f_{0}}-i\tau k\widetilde{f_{1}}-\widetilde{f_{2}})-e^{-i\tau kL}(\tau^2 k^2\widetilde{g_{0}}-i\tau k\widetilde{g_{1}}-\widetilde{g_{2}})&=&\widehat{q_{0}}(\tau k)-e^{-ik^3t}\,\widehat{q}(\tau k,t)\\
(\tau^4 k^2\widetilde{f_{0}}-i\tau^2 k\widetilde{f_{1}}-\widetilde{f_{2}})-e^{-i\tau^2 kL}(\tau^4 k^2\widetilde{g_{0}}-i\tau^2 k\widetilde{g_{1}}-\widetilde{g_{2}})&=&\widehat{q_{0}}(\tau^2 k)-e^{-ik^3t}\,\widehat{q}(\tau^2 k,t). \nonumber
\end{eqnarray}



After solving the system \eqref{system} and substituting the result into the expressions for $\widetilde{f}(k,t)$ and $\widetilde{g}(k,t)$ we obtain
\begin{align*}
\widetilde{f}(k,t) &= k^{2}\widetilde{f_{0}}+\frac{1}{\Delta(k)} \, \qc{N(k,t) A_{1}(k)-\tau^2 N(\tau k,t) A_{2}(k)-N(\tau^2 k,t)A_{3}(k)}, \\
\widetilde{g}(k,t) &= k^{2}\widetilde{g_{0}}+\frac{1}{\Delta(k)} \, \qc{N(k,t) B_{1}(k)-\tau^2 N(\tau k,t) B_{2}(k)-N(\tau^2 k,t)B_{3}(k)},
\end{align*}
where
\begin{align} \notag
N(k,t) &= N_{\mathrm{data}}(k) - e^{-ik^3t}\widehat{q}(k,t), \qquad \qquad N_{\mathrm{data}}(k) = \widehat{q_{0}}(k)-k^2 \widetilde{f_{0}} + e^{-ikL} k^2 \widetilde{g_{0}}, \\
\label{Ndata}\\
\Delta(k) &= \tau\left\{\left[e^{-ikL}+\tau e^{-i\tau kL}+\tau^2e^{-i\tau^2kL}\right]+\alpha
\left[e^{ikL}+\tau e^{i\tau kL}+\tau^2e^{i\tau^2kL}\right]\right\},
\label{NDelta}
\end{align}
and
\begin{align*}
A_{1}(k)&=\tau \alpha e^{ikL}+\tau^2 e^{-i\tau k L} + e^{-i\tau^2 k L}, &
A_{2}(k)&=e^{-ikL}-\alpha e^{i \tau kL}, &
A_{3}(k)&=e^{-ikL}-\alpha e^{i\tau^2 k L},\\
B_{1}(k)&=-\tau-\alpha e^{-i\tau kL}-\tau^2\alpha e^{-i\tau^2 k L}, &
B_{2}(k)&=1-\alpha e^{-i \tau^2 kL}, &
B_{3}(k)&=1-\alpha e^{-i\tau k L}.
\end{align*}
Note that the function $\Delta(k)$ has infinitely many zeros. The asymptotic location of these zeros depends crucially on the specific value of $\alpha$, and this will turn out to be critical for our calculations.

These expressions for $\widetilde{f}$ and $\widetilde{g}$ are now in terms of data and $\widehat{q}(k,t)$, so substituting these expressions into equation~\eqref{integralsoln} yields an integral representation of the solution depending only on data and $\widehat{q}(k,t)$:
\begin{multline} \label{eqn:SolnqWithqt}
	2\pi q(x,t)=\int_{\Re} e^{ikx+ik^3t} \, \widehat{q_{0}}(k) \, dk - \int_{\partial D^{+}} e^{ikx+ik^3t} \, \begin{bmatrix}\mbox{part of }\widetilde{f}(k,t)\\\mbox{depending on data}\end{bmatrix} \, dk
	- \int_{\partial D^{-}} e^{ik(x-L)+ik^3t} \, \begin{bmatrix}\mbox{part of }\widetilde{g}(k,t)\\\mbox{depending on data}\end{bmatrix} \, dk \\
	+ \int_{\partial D^{+}} e^{ikx} \, \frac
	{\widehat{q}(k,t) A_{1}(k)-\tau^2 \widehat{q}(\tau k,t) A_{2}(k)-\widehat{q}(\tau^2k,t)A_{3}(k)}{\Delta(k)} \, dk \\
	+ \int_{\partial D^{-}} e^{ik(x-L)} \, \frac {\widehat{q}(k,t) B_{1}(k)-\tau^2 \widehat{q}(\tau k,t) B_{2}(k)-\widehat{q}(\tau^2k,t)B_{3}(k)}{\Delta(k)} \, dk.
\end{multline}
Of course, it must be justified that splitting the integral along $\partial D^+$ (similarly the integral along $\partial D^-$) in this way yields convergent integrals.
Assuming such a justification can be provided, the aim would be to deduce that  the latter two integrals of equation~\eqref{eqn:SolnqWithqt} give a vanishing contribution - using analytic consideration, namely an asymptotic argument and Jordan's lemma. If this could be achieved, the first three integral terms of equation~\eqref{eqn:SolnqWithqt} would provide a solution representation for $q$ depending only upon the data of the problem.

In order to justify the arguments sketched in the above paragraph, there are three essential ingredients:
\begin{enumerate}
	\item[(A)]{
		The integrands in the latter four integrals of equation~\eqref{eqn:SolnqWithqt} are meromorphic functions of $k$, which have poles only at nontrivial zeros of $\Delta$.
	}
	\item[(B)]{The zero of $\Delta$ at $k=0$ corresponds to a removable singularity of each integrand. All other zeros of $\Delta$ are exterior to $D$, i.e.\ strictly to the right of $\partial D^\pm$.
	}
	\item[(C)]{
		The part of the integrand excluding the first exponential factor, in the fourth (respectively, fifth) integral of equation ~\eqref{eqn:SolnqWithqt}, decays as $k\to\infty$ from within the closure of $D^+$ (respectively, the closure of $D^-$).
	}
\end{enumerate}

\subsubsection*{The case $|\alpha|<1$}
Statement~(A) follows from the definitions of $N$, $\Delta$, $A_j$, $B_j$ above. Applying the methods of Langer~\cite{Lan1931a}, it can be shown that statement~(B) holds if $|\alpha|<1$, see \cite{Pel2005a, Smi2012a}.
Finally, still assuming $|\alpha|<1$, statement~(C) can be established using a geometric argument on the relative growth rates of $e^{i\tau^jk}$ for $j=0,1,2$, together with an integration by parts style asymptotic argument. Now~(A) and~(B) guarantee that there are no poles of the integrands lying on or to the left of the contours for the latter two integrals of equation~\eqref{eqn:SolnqWithqt}, so~(C) and Jordan's lemma imply that both integrands evaluate to $0$.

\subsubsection*{The case $|\alpha|=1$}
In this case, statement~(B), and therefore statement ~(C), are false.
Indeed it can be shown that, for generic data, each nontrivial zero of $\Delta$ is a pole of the integrands of the latter four integrals of~\eqref{eqn:SolnqWithqt}. This means that the latter four integrals of~\eqref{eqn:SolnqWithqt} do not converge. However, following a method suggested by~\cite{FP2001a} and implemented in~\cite{Smi2012a} (see also the review article \cite{deconinckSIAM}), we modify the contours $\partial D^\pm$ in equation~\eqref{integralsoln} by taking a semicircular path around each zero, \emph{before} making the substitution for $\widetilde{f}$ and $\widetilde{g}$, obtaining convergent integrals. As shown in figure~\ref{fig:defn.Dtilde}, near each zero of $\Delta$, this  deformation to the contours of integration is such that each zero lying on the original contour is avoided.
\begin{figure}[h!]
  \begin{center}
    \caption{\label{fig:defn.Dtilde}
The new domains $\widetilde{D}^\pm$ in the case $|\alpha|=1$. The dashed lines represent $\partial D^\pm$, and the dots correspond to zeros of $\Delta$. Note that, as $0$ is a removable singularity of all integrands, we may make an arbitrary finite contour deformation near zero.}
    \includegraphics{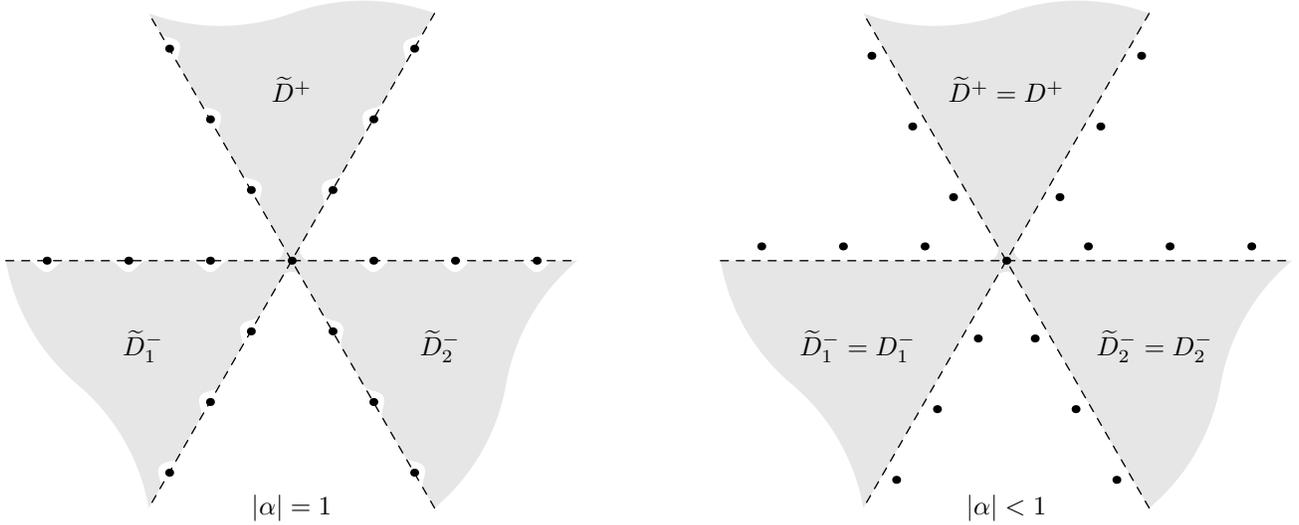}
  \end{center}
\end{figure}

\noindent Indeed, choosing $\widetilde{D}^\pm$ satisfying
\begin{equation} \label{eqn:defn.Dshrunk}
	\widetilde{D}^\pm = D^\pm \setminus \bigcup_{\substack{\lambda\in\mathbb{C}\setminus\{0\}:\\\Delta(\lambda)=0}} \mbox{ a small neighbourhood of } \lambda,
\end{equation}
by analyticity of $\widetilde{f}$ and $\widetilde{g}$, Cauchy's theorem and equation~\eqref{integralsoln}, it holds that
\begin{equation}\label{integralsoln.deformed}
	q(x,t)=\frac{1}{2\pi}\qc{\int_{\Re} e^{ikx+ik^3t} \, \widehat{q_{0}}(k) \, dk - \int_{\partial \widetilde{D}^{+}} e^{ikx+ik^3t} \, \widetilde{f}(k,t) \, dk  - \int_{\partial \widetilde{D}^{-}} e^{ik(x-L)+ik^3t} \, \widetilde{g}(k,t) \, dk}.
\end{equation}
Then, because of the way we chose $\widetilde{D}^{\pm}$, the modified statement
\begin{enumerate}
	\item[(B')]{
		Every zero of $\Delta$ is exterior to $\widetilde{D}^{\pm}$, i.e.\ strictly to the right of $\partial \widetilde{D}^\pm$
	}
\end{enumerate}
is immediate, and
\begin{enumerate}
	\item[(C')]{
		The part of the integrand excluding the first exponential factor, in the fourth (respectively, fifth) integral of
		equation~\eqref{eqn:SolnqWithqt}
		decays as $k\to\infty$ from within the closure of $\widetilde{D}^+$ (respectively, the closure of $\widetilde{D}^-$)
	}
\end{enumerate}
can be justified using an argument similar to that used for statement~(C). In the same way as for $|\alpha|<1$ above. The statements~(A),~(B') and~(C') then imply that the latter two integrals of equation~\eqref{eqn:SolnqWithqt} vanish.

Note that, as $\Delta$ satisfies the symmetry $\Delta(\tau k)=\Delta(k)$, we are free to choose $\widetilde{D}^\pm$ in such a way that $\widetilde{D}^+\cup\widetilde{D}^-$ has the same rotational symmetry. Further, as shown in figure~\ref{fig:defn.Dtilde}, in the case $|\alpha|<1$, by choosing small enough neighbourhoods of each nontrivial zero of $\Delta$, we can ensure $\widetilde{D}^\pm = D^\pm$.
Hence, for all $|\alpha|\leq1$, with $\widetilde{D}^\pm$ defined by equation~\eqref{eqn:defn.Dshrunk} using any choice of finite neighbourhoods, we can write
		equation~\eqref{eqn:SolnqWithqt} as
\begin{multline}
	2\pi q(x,t)=\int_{\Re} e^{ikx+ik^3t} \, \widehat{q_{0}}(k) \, dk \\
	- \int_{\partial \widetilde{D}^{+}} e^{ikx+ik^3t} \, k^{2}\widetilde{f_{0}}+\frac{N_{\mathrm{data}}(k) A_{1}(k)-\tau^2 N_{\mathrm{data}}(\tau k) A_{2}(k)-N_{\mathrm{data}}(\tau^2 k)A_{3}(k)} {\Delta(k)}\, dk \\
	- \int_{\partial \widetilde{D}^{-}} e^{ik(x-L)+ik^3t} \, k^{2}\widetilde{g_{0}}+\frac{N_{\mathrm{data}}(k) B_{1}(k)-\tau^2 N_{\mathrm{data}}(\tau k) B_{2}(k)-N_{\mathrm{data}}(\tau^2 k)B_{3}(k)}{\Delta(k)} \, dk.
	\label{usualrep}
\end{multline}
This representation depends only upon the data of the problem,  and  is the  usual representation provided in the Unified Transform literature for this problem.

This representation could be the beginning of a numerical strategy. However, in implementing the numerical evaluation, we found that a more symmetric, alternative formulation of  (\ref{usualrep}) is a more effective starting point.

\subsection{An alternative formulation of the integral representation} \label{ssec:AlternativeFormulation}
%

To rewrite the integral representation in a more elegant and symmetric way,  we combine integrands to define two new spectral functions, $ \zeta^{+}(k,t)$ and $ \zeta^{-}(k,t)$.

Define
\begin{align*}
\widetilde{f}_{data}(k,t) &= k^{2}\widetilde{f_{0}}+\frac{1}{\Delta(k)} \, \qc{N_{data}(k,t) A_{1}(k)-\tau^2 N_{data}(\tau k,t) A_{2}(k)-N_{data}(\tau^2 k,t)A_{3}(k)}, \\
\widetilde{g}_{data}(k,t) &= k^{2}\widetilde{g_{0}}+\frac{1}{\Delta(k)} \, \qc{N_{data}(k,t) B_{1}(k)-\tau^2 N_{data}(\tau k,t) B_{2}(k)-N_{data}(\tau^2 k,t)B_{3}(k)},
\end{align*}
Then set
\begin{equation}
  \zeta^{+}(k,t):= \widetilde{f}_{data}(k,t) \Delta(k), \quad \quad \zeta^{-}(k,t):= \widetilde{g}_{data}(k,t) \Delta(k)
\label{zetadef}\end{equation}


\begin{prop}\label{prop1}
The functions $\zeta^\pm$ defined by (\ref{zetadef}) satisfy the relation
\begin{equation}
\zeta^{+}(k,t)-e^{-ikL}\zeta^{-}(k,t)=\widehat{q_{0}}(k) \, \Delta(k),\qquad k\in\mathbb C.
\end{equation}
\end{prop}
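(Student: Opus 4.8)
The plan is to substitute the defining formulas \eqref{zetadef} for $\zeta^{\pm}$, together with the expressions for $\widetilde{f}_{data}$ and $\widetilde{g}_{data}$ displayed just above, directly into the left-hand side $\zeta^{+}(k,t)-e^{-ikL}\zeta^{-}(k,t)$ and to collect terms according to the three arguments $N_{data}(k,t)$, $N_{data}(\tau k,t)$, $N_{data}(\tau^{2}k,t)$. Since multiplying $\widetilde{f}_{data}$ and $\widetilde{g}_{data}$ by $\Delta$ clears their denominator, I would obtain
\begin{multline*}
\zeta^{+}(k,t)-e^{-ikL}\zeta^{-}(k,t) \\
= \bigl(k^{2}\widetilde{f_{0}}-e^{-ikL}k^{2}\widetilde{g_{0}}\bigr)\Delta(k)
+ N_{data}(k,t)\bigl(A_{1}(k)-e^{-ikL}B_{1}(k)\bigr) \\
- \tau^{2}N_{data}(\tau k,t)\bigl(A_{2}(k)-e^{-ikL}B_{2}(k)\bigr)
- N_{data}(\tau^{2}k,t)\bigl(A_{3}(k)-e^{-ikL}B_{3}(k)\bigr).
\end{multline*}
The proposition then reduces to the three purely algebraic identities
\begin{equation*}
A_{1}(k)-e^{-ikL}B_{1}(k)=\Delta(k), \qquad
A_{2}(k)-e^{-ikL}B_{2}(k)=0, \qquad
A_{3}(k)-e^{-ikL}B_{3}(k)=0,
\end{equation*}
together with the one-line rearrangement of the definition of $N_{data}$, namely $\bigl(k^{2}\widetilde{f_{0}}-e^{-ikL}k^{2}\widetilde{g_{0}}\bigr)+N_{data}(k,t)=\widehat{q_{0}}(k)$: granting the identities, the last two terms above vanish and the first two collapse to $\widehat{q_{0}}(k)\Delta(k)$, which is the claim.

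The only genuine computation is the verification of those three identities, and I expect the (entirely mechanical) bookkeeping there to be the main obstacle. Nothing beyond $\tau=e^{2\pi i/3}$ is needed, i.e.\ $\tau^{3}=1$ and $1+\tau+\tau^{2}=0$: I would expand $A_{j}$, $B_{j}$ and $\Delta$ from their definitions, merge the cross terms using $e^{-ikL}e^{-i\tau kL}=e^{-i(1+\tau)kL}=e^{i\tau^{2}kL}$ and the symmetric identity $e^{-ikL}e^{-i\tau^{2}kL}=e^{i\tau kL}$, and then read off the survivors. For $A_{2}-e^{-ikL}B_{2}$ and $A_{3}-e^{-ikL}B_{3}$ the resulting exponentials cancel in pairs, leaving $0$; for $A_{1}-e^{-ikL}B_{1}$ the six remaining terms are exactly $\tau$ times the bracketed combination defining $\Delta$. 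The one place to be careful is keeping track of which power of $\tau$ multiplies which exponential.

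A more structural route bypasses even this computation: by construction, $\widetilde{f}_{data}$ and $\widetilde{g}_{data}$ are what one gets from $\widetilde{f}$ and $\widetilde{g}$ by discarding the part of $N$ that involves $\widehat{q}$, equivalently the solution of the linear system \eqref{system} with the terms $e^{-ik^{3}t}\widehat{q}(\tau^{j}k,t)$ ($j=0,1,2$) deleted from its right-hand sides. As the first equation of that system is precisely the global relation \eqref{globalrel}, its solution automatically satisfies $\widetilde{f}_{data}(k,t)-e^{-ikL}\widetilde{g}_{data}(k,t)=\widehat{q_{0}}(k)$, and multiplying through by $\Delta(k)$ and using \eqref{zetadef} yields the proposition at once. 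Either way, the content is simply that the $\zeta^{\pm}$ inherit the structure of the global relation, now in the polynomial form obtained after clearing the denominator $\Delta$.
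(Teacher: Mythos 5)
Your primary argument is correct and is essentially the paper's own proof: both reduce the claim to the three identities $A_1-e^{-ikL}B_1=\Delta$, $A_2-e^{-ikL}B_2=0$, $A_3-e^{-ikL}B_3=0$ (which do hold, via $e^{-ikL}e^{-i\tau kL}=e^{i\tau^2kL}$ and $e^{-ikL}e^{-i\tau^2kL}=e^{i\tau kL}$), plus the rearrangement of the definition of $N_{\mathrm{data}}$. Your alternative ``structural'' route --- observing that $\widetilde{f}_{data},\widetilde{g}_{data}$ solve the system \eqref{system} with the $\widehat{q}$ terms deleted, so they automatically satisfy the first equation $\widetilde{f}_{data}-e^{-ikL}\widetilde{g}_{data}=\widehat{q_{0}}(k)$ --- is also valid (away from zeros of $\Delta$, then everywhere by continuity) and explains \emph{why} the three identities must hold, but the paper does not take that route.
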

\begin{proof}
Using the definition (\ref{NDelta}) of the function $N(k)$ and rearranging terms,
we find
\begin{equation}
  \begin{split}
\zeta^{+}(k,t)-e^{-ikL}\zeta^{-}(k,t)
&=
\bigg[\,\widehat{q_{0}}(k) (A_1-e^{-ikL}B_1)-\tau^2\widehat{q_{0}}(\tau k) (A_2-e^{-ikL}B_2)-\widehat{q_{0}}(\tau^2 k)(A_3-e^{-ikL}B_3)\\
&\qquad
-  k^2 \widetilde{f_{0}} \, [(A_1-e^{-ikL}B_1) -
 \tau (A_2-e^{-ikL}B_2)-\tau (A_3-e^{-ikL}B_3)-\Delta(k)]\\
&\qquad
+  k^2 \widetilde{g_{0}} \, [e^{-ikL} (A_1-e^{-ikL}B_1)
  \\
  &\qquad \qquad \qquad -\tau e^{-i\tau kL}(A_2-e^{-ikL}B_2)-\tau e^{-i\tau^2 kL}(A_3-e^{-ikL}B_3)
 -e^{-ikL}\Delta(k)]\bigg].
  \end{split}
\end{equation}
A straightforward calculation then shows we have that
\begin{equation}
  \begin{split}
    A_1-e^{-ikL}B_1 &= \Delta(k) \\
    A_2-e^{-ikL}B_2 &=0 \text{ and} \\
    A_3-e^{-ikL}B_3 &=0.
  \end{split}
\end{equation}
Hence
$$
\zeta^{+}(k,t)-e^{-ikL}\zeta^{-}(k,t)
=\widehat{q_{0}}(k) \, \Delta(k),
$$
as required.
\end{proof}
\noindent
Using the result of Proposition \ref{prop1}, we can simplify the evaluation of the integral representation solution equation \eqref{integralsoln}, and absorb the first integral into the other two. The  integral representation thus obtained for the solution of the problem is given by:
\begin{equation}\label{newintrep}
q(x,t)=\frac{1}{2\pi}\qc{ \int_{\partial \widetilde{E}^{+}} e^{ikx+ik^3t} \, \frac{\zeta^{+}(k,t)}{\Delta(k)} \, dk + \int_{\partial \widetilde{E}^{-}}e^{ik(x-L)+ik^3t}\, \frac{\zeta^{-}(k,t)}{\Delta(k)} \, dk\,}
\end{equation}
where  the domains  $\widetilde{E}^{+}$ and $\widetilde{E}^{-}$ are the half-plane complements of the domains $\widetilde{D}^+$ and $\widetilde{D}^-$ (see figure \ref{figE}), with the usual orientation of the integration path around $\widetilde{E}^{+}$ and $\widetilde{E}^{-}$, leaving the domain always on the left.
\begin{figure}[H]
  \centering
  \caption{\label{figE}
    {The complex regions $\widetilde{E}^{+}$ and $\widetilde{E}^{-}$. The boundary of these regions is where the integral (\ref{newintrep}) is evaluated.}}
  \includegraphics{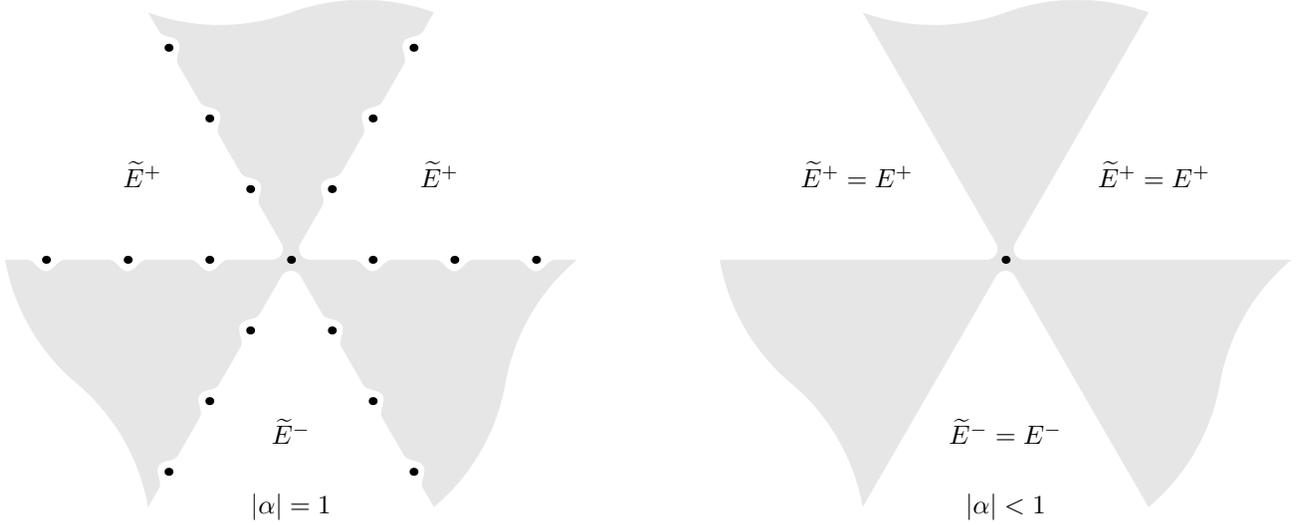}
\end{figure}

\subsection{Deformation of line integrals}





Figure~\ref{figE} suggests a particular choice of contours of integration $\partial \widetilde{E}^\pm$, for each value of $\alpha$. However, the arguments of sections~\ref{ssec:AnalysisGR}--\ref{ssec:AlternativeFormulation} do not require that any particular contour is followed, only that the chosen contour passes each nontrivial zero of $\Delta$ on the same side as does the corresponding contour on figure~\ref{figE}. Therefore, we are free to choose our contours in such a way that the numerical integration can be performed with optimal efficiency. In order to select appropriate contours, we study the asymptotics of the integrands involved.

As soon as any contour of integration has been deformed away from the rays of $\tau^j\mathbb{R}$ for $k\gg1$, the rapid decay (or blow-up) of the exponential factor $e^{ik^3t}$ determines the asymptotics. Therefore, as a general principal, it is advantageous to ``close'' the contours into $E$ and away from $D$. Indeed, by doing so, the tail of the integrals is bounded, and the error introduced by truncating the contours to a finite length is controlled. This procedure will be described further in section \ref{section4}.

\subsubsection*{The case $|\alpha|=1$}

In this case, the zeros of $\Delta$ are all on the lines $\tau^j\mathbb{R}$ for $j=0,1,2$, and are asymptotically regularly distributed, so they have a finite infimal separation. Moreover, as $k\to\infty$ within any subset of $\mathbb{C}^\pm$ bounded uniformly away from each zero of $\Delta$, the ratio $\zeta^\pm/\Delta=\mathcal{O}(|k|^{-1})$. Therefore, using Jordan's lemma and the decay of the exponential factor $e^{ik^3t}$, we can deform the contours of integration as far as we choose into $E^\pm$, provided we leave loops about each nontrivial zero of $\Delta$.

Each such loop integral corresponds to a residue at the pole.
As the spatial differential operator associated with this problem is self-adjoint, its eigenfunctions form a basis in the space of admissible initial data, so the solution may be represented as a convergent expansion in the eigenfunctions. By~\cite{FS2016a,Bir1908b}, the nontrivial zeros of $\Delta$ are the cube roots of the eigenvalues, and the series obtained by evaluating the corresponding residues is the eigenfunction expansion. As this series converges, the error introduced by truncating the series is controlled.

Using the above argument, we may ignore the contributions of all but a few residues close to $0$. So, for some $N\in\mathbb{N}$, we choose infinite contours which:
\begin{itemize}
	\item{
		Lie within $D$ close to $0$, so that they enclose the $N$ poles of the integrand that lie closest to $0$ on each ray, and the contributions from these poles are included,
	}
	\item{
		Cross into $E$ after the $N$-th but before the $N+1$-st pole on each ray, so that the contours do not pass through any of the poles,
	}
	\item{
		Extend to $\infty$, but remain bounded within $E$, thereby excluding the small contribution from the remaining poles, but ensuring the integrand is rapidly decaying along this part of the contour.
	}
\end{itemize}
Finally, we truncate the contour soon after it enters $E$, as the contribution from the remaining infinite part is very small.

\subsubsection*{The case $0<|\alpha|<1$}

For these values of $\alpha$, the same asymptotic results as in the $|\alpha|=1$ case hold on the decay of $\zeta^\pm/\Delta$, but now the zeros of $\Delta$ lie asymptotically on rays that lie within $E$ (see figure \ref{fig:defn.Dtilde}) and parallel to the lines $\tau^j\mathbb{R}$ for $j=0,1,2$. The distance between these rays and lines is a strictly increasing function of $1/|\alpha|$. As the poles are now uniformly bounded inside $E^\pm$, we may choose to deform our contours of integration into $E^\pm$ in such a way that they are still straight rays, and remain on the same side of each zero of $\Delta$, but we can take advantage of the decay of the exponential factor $e^{ik^3t}$. In this way, no analysis of the residues is necessary, but we may still truncate the contours without introducing a large error.

\subsubsection*{The case $\alpha=0$}

The zeros of $\Delta$ now lie on the rays $-i\tau^j\mathbb{R}^+$, for $j=0,1,2$, but the ratios $\zeta^\pm/\Delta$ \emph{do not decay} as $k\to\infty$ from within $E^\pm$, even if $k$ is uniformly bounded away from the zeros of $\Delta$. Certainly, the integrand grows exponentially as $k\to\infty$ from within $D^\pm$, because of the $e^{ik^3t}$ factor. Therefore, it is not possible to deform the contours of integration away from the lines $\tau^j\mathbb{R}$ at infinity. Of course, we can still make any finite deformation as long as we do not cross any nontrivial zeros of $\Delta$.

However, as $k\to\infty$ along the rays $\tau^{j/2}\mathbb{R}^+$ for $j=0,1,2,3$ (respectively, $j=4,5$), $\zeta^+/\Delta=\mathcal{O}(|k|^{-1})$ (respectively, $\zeta^-/\Delta=\mathcal{O}(|k|^{-1})$). Therefore, using the Riemann-Lebesgue lemma, we can bound the error introduced by truncating the contours to a finite length.

\section{General numerical strategy and a comparison with the half-line problem}\label{section3}

The numerical strategy we propose to compute the unified transform method solution involves calculating line integrals defined on the complex $k$-plane. The numerical evaluation of line integrals containing an exponential term with an analytic function in the integrand involves the implementation of parabolic, hyperbolic or cotangent contour specifications given in \cite{Trefethen2006}. By specifying line integrals around $\partial \widetilde{E}^\pm$ as hyperbola may numerically approximate the solution to the equation~\eqref{lkdv} for $0 \leq \alpha < 1$. When $\alpha=1$, a new kind of contour specification is required since all the poles are on the $\tau^j\mathbb{R}$ lines for $j=0,1,2$.

After implementing the deformation mappings $k(\theta)$ to the integral representations given by~\eqref{newintrep}, the line integrals on the complex value $k$ become real valued integrals on $\theta \in [-\infty, \infty]$. In application, we truncate the values $\theta$ can take by selecting a $\theta_{max}$ such that $\theta \in [-\theta_{max}, \theta_{max}]$. We quantify the selection of $\theta_{max}$ and benchmark our results by implementing a forcing term to Airy's equation. This allows us to choose a specific forcing function so that $q(x,t)$ is known.

The Fokas transform method has been applied to the half-line problem for Airy's equation in \cite{flyerfokas2008}. In order to quanitify the accuracy of simulations to the full BVP, we take the opportunity to benchmark the deformation approximation according to different $\theta_{max}$ values on the half-line.
Also, the comparison of half-line problem and finite interval case will be given in this section.

\subsection{Half-line case}
{Consider the non-homogeneous initial boundary value problem for Airy's equation
\begin{equation}\label{nonhomLKdV}
\begin{split}
  q_{t}(x,t)+q_{xxx}(x,t)&=h(x,t), \quad x\in[0,\infty), \quad t>0, \\
  q(x,0)&=q_{0}(x),\\
  q(0,t)&=f_{0}(t).
  \end{split}
\end{equation}
The Fokas integral representation solution of the \eqref{nonhomLKdV} can be computed as
\begin{equation}\label{halflinesoln}
q(x,t)=\frac{1}{2\pi} \qc{\int_{\Re} e^{ikx+ik^3t} \, \widehat{q_{0}}(k) \, dk - \int_{\partial D^{+}} e^{ikx+ik^3t}\, \frac{\zeta^{+}(k,t)}{\Delta(k)}\, dk + \int_{\Re} e^{ikx+ik^3t} \, H(k,t) \, dk}
\end{equation}
where the {\em spectral functions} $\widehat{q_{0}}(k)$ and $\zeta^{+}(k,t)$ are defined in (\ref{eq:qhat}) and (\ref{zetadef}) and $H(k,t)$ is given by
\begin{equation}\label{Hdef}
H(k,t)= \int_{0}^{t}\int_{0}^{\infty} e^{-ikx-ik^3s}\, h(x,s)\, dx ds.
\end{equation}
Note that the original (not the symmetric form of) the Fokas integral representation is used for the solution representation of the problem in the half-line. Due to the nature of the problem on the half-line there is no boundary value on the right which corresponds line integral around $D^{-}$ region. For the detailed analysis of the Fokas integral representation solution of non-homogeneous linear evolution PDEs, see~\cite{papatheodoroukandili2009}.}

\begin{example}
  \label{ex:halfline}
  We select the initial, boundary and forcing conditions such that
  \begin{eqnarray*}
    q_{0}(x)&=&0 \\
    f_{0}(t)&=&\sin{2 \pi t} \\
    h(x,t)&=&2 \pi \,e^{-x}\cos{(2 \pi t)}-e^{-x}\sin{(2 \pi t)},
  \end{eqnarray*}
  then the analytic solution of non-homogenous Airy's equation defined by~\eqref{nonhomLKdV} is
  \begin{equation*}
    q(x,t) = e^{-x} \,\sin{(2 \pi t)}.
  \end{equation*}
\end{example}
It is important to note that in order to evaluate line integrals along $\Re$ and $\partial D^{+}$, we can deform all of the line integrals with the same hyperbola contour specification  $k(\theta)=i\sin{(\frac{\pi}{6}-i\theta)}$. By the nature of the problem, there are no poles in the half-line case.
We implement $k(\theta)$ as a deformation mapping for the solution given by equation~\eqref{halflinesoln}. The results illustrating the effect of varying $\theta_{max}$ is given in figure~\ref{halflinebenchmark}. As expected, the error introduced by truncating the contour decreases when $\theta_{max}$ increases since the deformation mapping stays in the region where both $e^{ikx}$ and $e^{ik^3t}$ are bounded.

\begin{figure*}[h!]
    \centering
    \caption{A numerical benchmark of Airy's equation posed on the half-line with $(x,t) \in [0,\infty)\times[0,1]$, see example \ref{ex:halfline}. Note that the error induced by truncating the deformation contour becomes very small very quickly as $\theta_{max}$ is increased.}\label{halflinebenchmark}
    \begin{subfigure}[t]{0.3\textwidth}
        \centering
        \includegraphics[height=0.9\textwidth]{./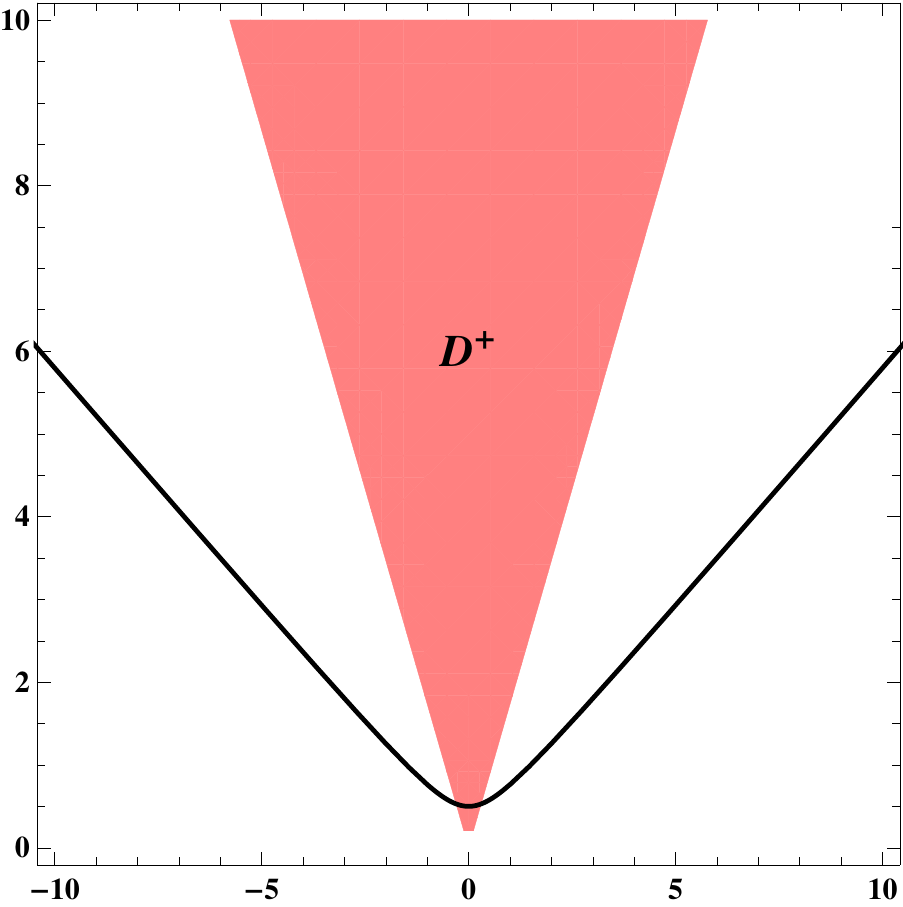}
        \caption{The computation region and the deformation mapping.}
    \end{subfigure}%
    \hfill
    \begin{subfigure}[t]{0.3\textwidth}
        \centering
        \includegraphics[height=0.9\textwidth]{./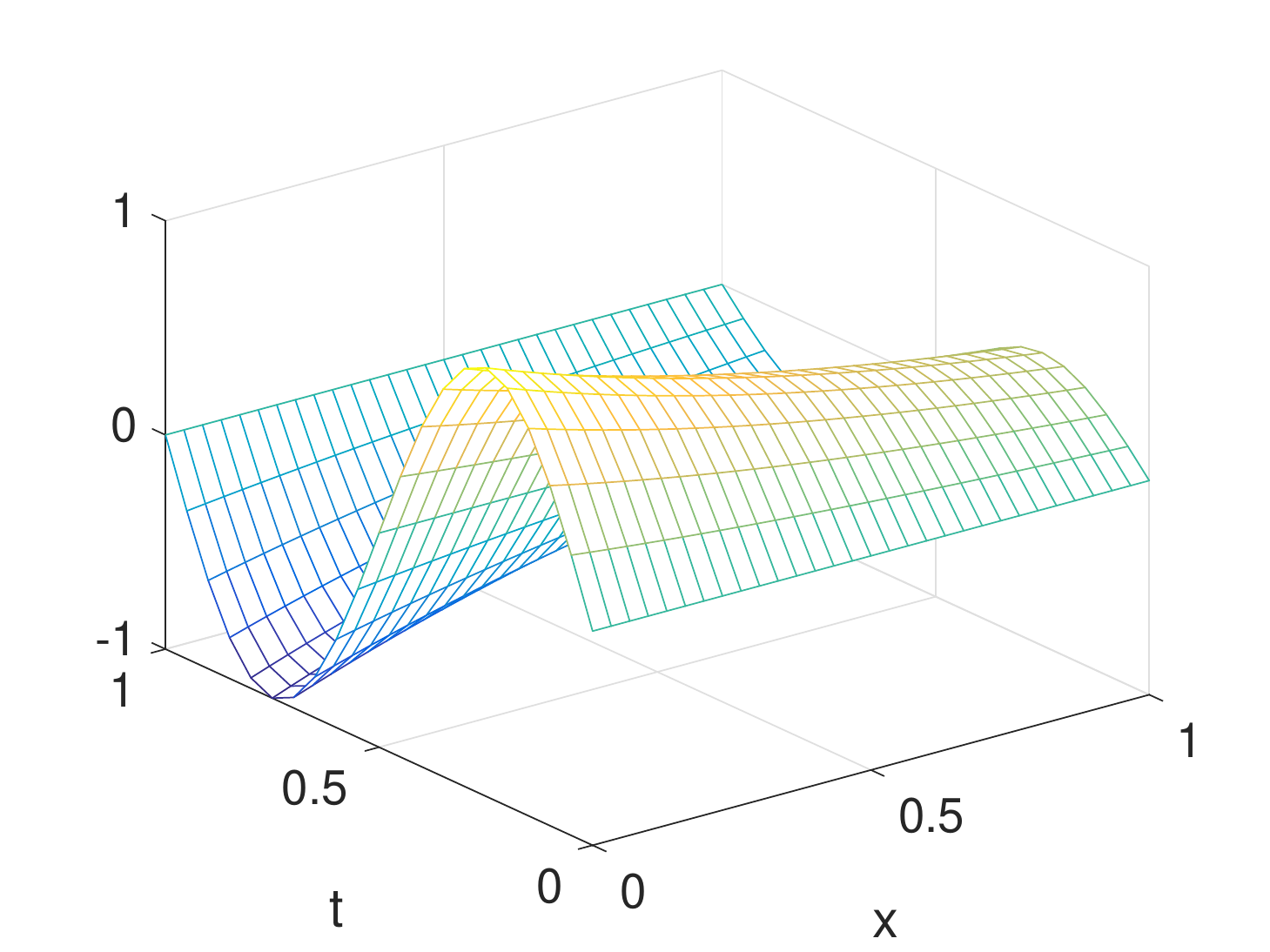}
        \caption{The numerical approximation with $\theta_{max}=50$.}
    \end{subfigure}
    \hfill
    \begin{subfigure}[t]{0.3\textwidth}
      \centering
      \includegraphics[height=0.9\textwidth]{./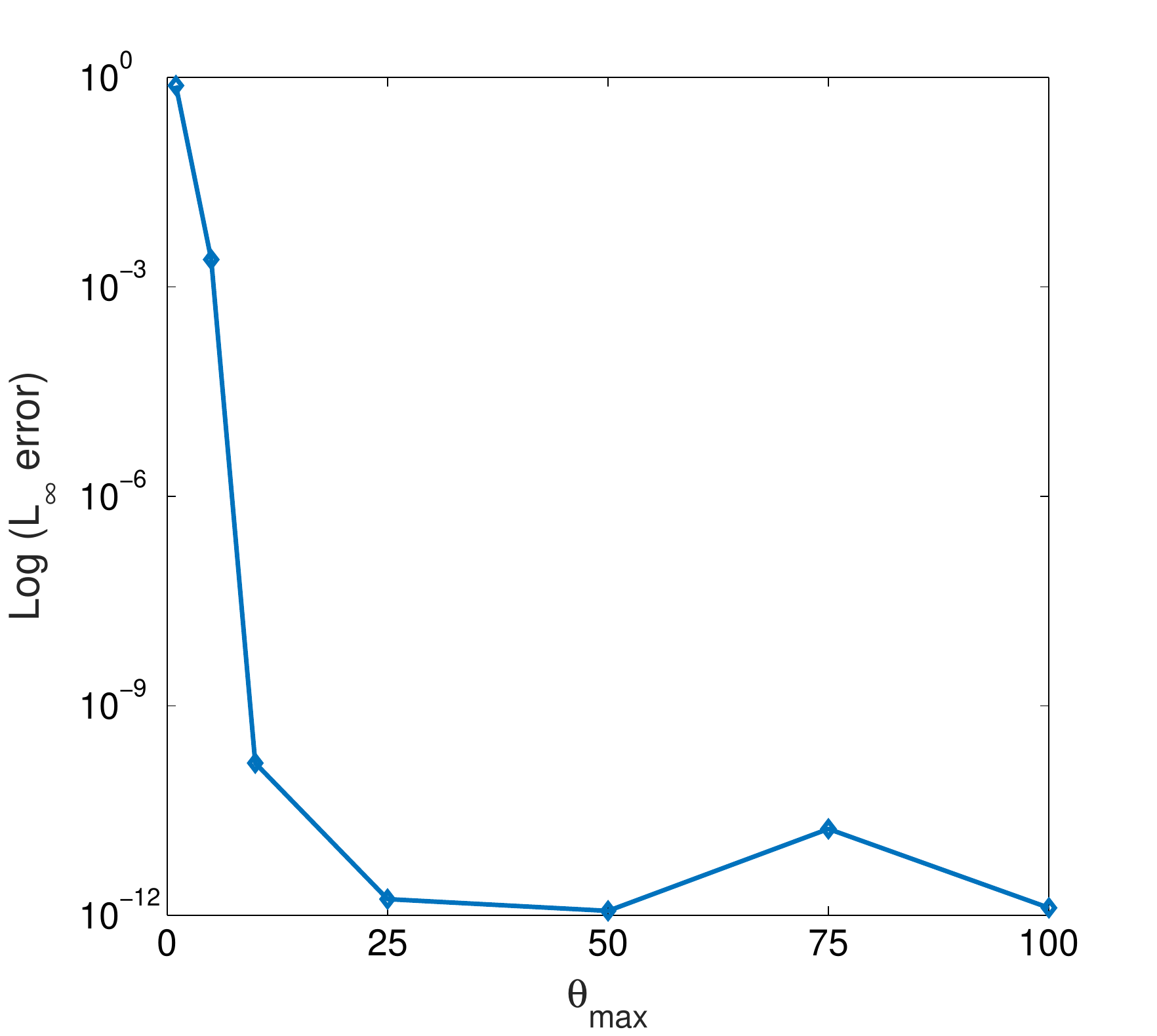}
      \caption{A plot of the maximum error in approximation against the contour truncation value.}
    \end{subfigure}%
\end{figure*}

\subsection{Finite interval case}
The computation of the solution is far more complicated when Airy's equation is posed over a finite interval $[0,L]$ than when it is posed on the half-line. Here, choosing an appropriate deformation mapping is crucial to evaluate the Fokas integral representation solution (\ref{newintrep}). For the problem~\eqref{lkdv} we require that deformation mappings $k(\theta)$ must have the following properties:
\begin{enumerate}[i)]
\item{The deformed contours should not pass through the poles which are zeros of $\Delta(k)$.}
\item{The deformed contours should follow a trajectory such that any poles stay on the left side of the direction of these mappings.}
\item{When $0 \leq \alpha < 1$, the deformed contours must remain asymptotically on the $\tau^j\mathbb{R}$ lines for $j=0,1,2$. Indeed, any deformation from these lines results in at least one of the exponentials $e^{\pm\tau ^jk}$, involved in the integrand, to be unbounded for large $|k|$, 
see e.g. figure~\ref{tikz}.
    \begin{figure}[H]
    \centering
    \caption{\label{tikz} \small{Deformation effects on $e^{ikL}$, $e^{i\tau kL}$ and $e^{i\tau^2 kL}$ terms.}}
    \includegraphics[height=0.4\textwidth,keepaspectratio=true]{./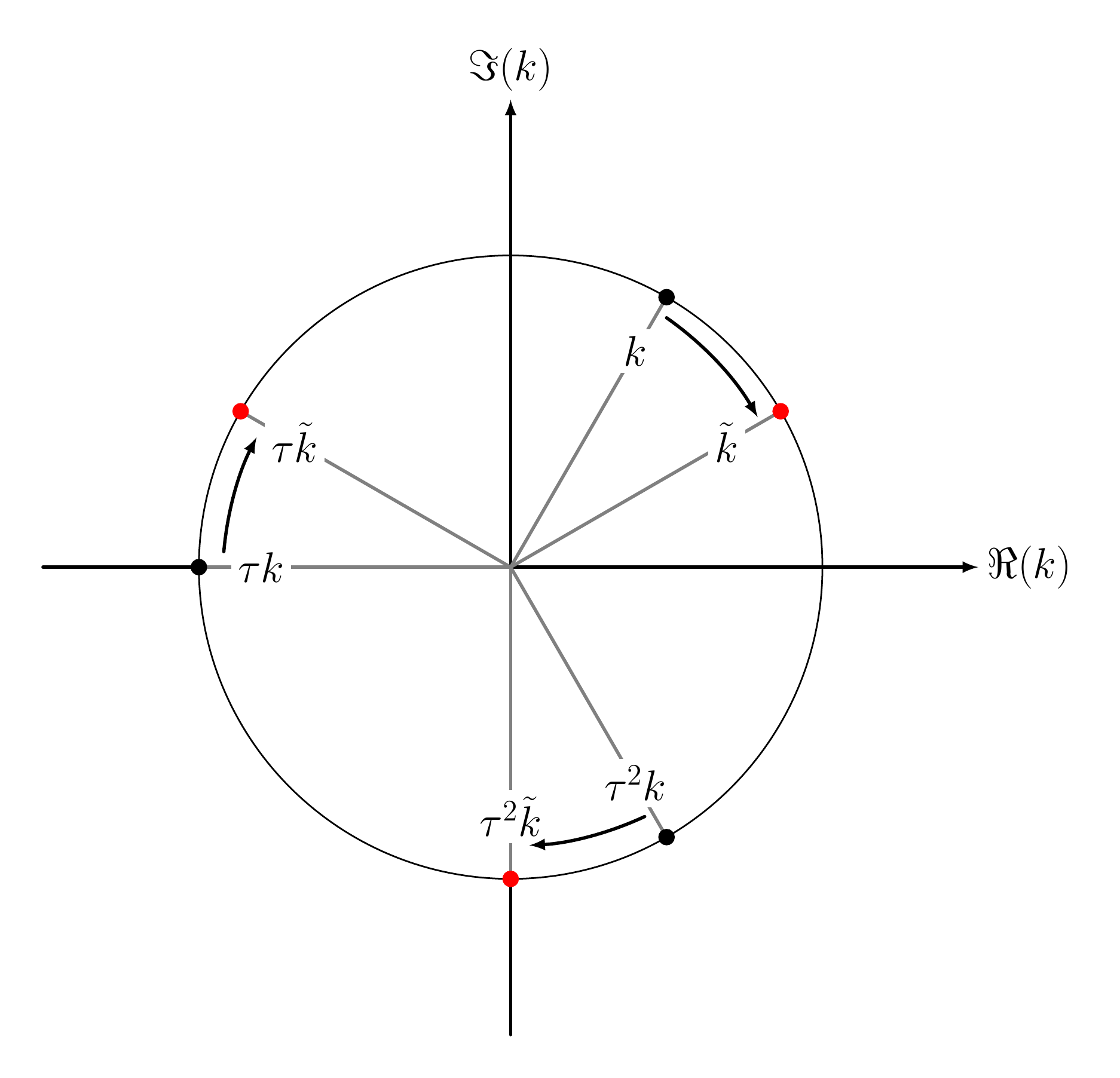}
    \vspace{-10pt}
    \end{figure}
}
\item{The numerical  integral value should include the contributions of the residues of sufficiently many of the poles lying on the $\tau^j\mathbb{R}$ lines ($j=-1,2,$) which are close to $0$. When $\alpha=1$, this can be done by deforming the contours so that they remain within the $D^{\pm}$ region near $0$ and in $E^{\pm}$ otherwise, see figure \ref{fig:accountforresidue}.}
\end{enumerate}

\begin{figure}[h!]
  \centering
  \caption{An example of a deformed contour that accounts for information of residues near the origin.
  }
  \includegraphics[height=0.4\textwidth]{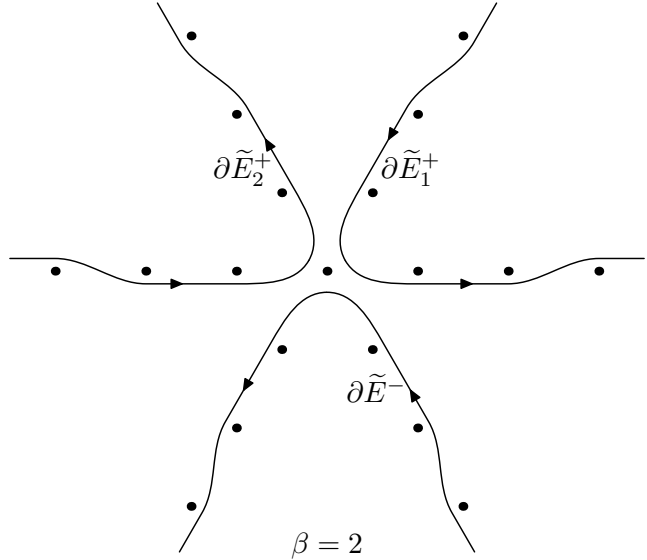}
  \label{fig:accountforresidue}
\end{figure}

\section{Numerical examples}\label{section4}
In this section we summarise extensive numerical experiments for Airy's equation defined on a finite interval $[0,1]$. We study the effects of truncating the deformation contour and varying the boundary coupling constants $\alpha \in [0,1]$. Different values of {$\alpha$} result in different positions of the poles. Hence we take care to vary the deformation functions accordingly. With that in mind we present the results in separate cases, $\alpha=0$, $0<\alpha<1$ and $\alpha=1$.

\subsection{When $\alpha=0$}
In order to approximate the numerical solution of the initial-boundary value problem~\eqref{lkdv} with $\alpha=0$, the complex contours are deformed by the following functions:
\begin{equation}\label{zerodeform}
  k_{1}(\theta)= \tau^2 k_{3}(\theta), \quad
  k_{2}(\theta)= \tau k_{3}(\theta), \quad
  k_{3}(\theta)=-i \eta \sin{\qp{\frac{\pi}{3}-i\theta}},
\end{equation}
where $k_{1}(\theta)$ and $k_{2}(\theta)$ deform the boundary of the domain $E^{+}$ and $k_{3}(\theta)$ deforms the boundary of $E^{-}$ (see figure \ref{figE}). The parameter $\eta$ determines the distance of the curve with respect to origin. To test the reliability of these deformation functions for $\alpha=0$ and to justify the truncation of $\theta$ to a finite region we compute the error by implementing a forcing term in Airy's equation similarly to our test in the half line (compare with example \ref{ex:halfline}), i.e.,
\begin{equation}\label{lkdv2}
  \begin{split}
    q_{t}(x,t)+q_{xxx}(x,t)&=h(x,t), \quad x\in[0,1], \quad t>0 \\
    q(x,0)&=q_{0}(x), \quad x\in[0,1]\\
    q(0,t)&=f_{0}(t), \quad t>0\\
    q(L,t)&=g_{0}(t), \quad t>0\\
    q_{x}(L,t)&=\alpha \, q_{x}(0,t), \quad t>0, \alpha\in\mathbb R.
  \end{split}
\end{equation}
This then allows us to specify a known $q(x,t)$ function as an analytic solution and benchmark appropriately.

The integral representation of the equation~(\ref{lkdv2}) is given by:
\begin{equation}
  q(x,t)=\frac{1}{2\pi} \qc{\int_{\Re} e^{ikx+ik^3t} \, [\,\widehat{q_{0}}(k)\, +\, H(k,t)\,] \, dk - \int_{\partial \widetilde{D}^{+}} e^{ikx+ik^3t} \, \frac{\zeta^{+}(k,t)}{\Delta(k)} \, dk - \int_{\partial \widetilde{D}^{-}}e^{ik(x-L)+ik^3t}\, \frac{\zeta^{-}(k,t)}{\Delta(k)} \, dk}
\end{equation}
where the spectral functions $\widehat{q_{0}}(k)$, $\zeta^{\pm}(k,t)$ and $H(k,t)$ are defined in (\ref{eq:qhat}), (\ref{zetadef}) and (\ref{Hdef}),   respectively. It should be noted that adding the forcing function to the equation adds one term to the function $N_{\mathrm{data}}$ which changes $\widetilde{f}_{data}$ and $\widetilde{g}_{data}$ functions, and thus the definition of $\zeta^{\pm}(k,t)$ function given in (\ref{zetadef}) is updated accordingly.
\begin{equation*}
N_{\mathrm{data}}(k) = \widehat{q_{0}}(k)-k^2 \widetilde{f_{0}} + e^{-ikL} k^2 \widetilde{g_{0}} + H(k)
\end{equation*}
Moreover, the integral from $0$ to $\infty$ in definition of the function $H(k,t)$ given in (\ref{Hdef}) should be replaced with the integral from $0$ to $1$, since $x \in [0,1]$.

The integral representation solution with forcing term can be also simplified to the symmetric form by absorbing the first integral into the other two integrals, with the similar argument done above. Then, the alternative formulation of the integral representation with forcing term is obtained by:
\begin{equation}
\label{eq:integral-rep-inc-forcing}
q(x,t)=\frac{1}{2\pi}\qc{ \int_{\partial \widetilde{E}^{+}} e^{ikx+ik^3t} \, \frac{\zeta^{+}(k,t)}{\Delta(k)} \, dk + \int_{\partial \widetilde{E}^{-}}e^{ik(x-L)+ik^3t}\, \frac{\zeta^{-}(k,t)}{\Delta(k)} \, dk\,}
\end{equation}
where  the domains  $\widetilde{E}^{+}$ and $\widetilde{E}^{-}$ are the half-plane complements of the domains $\widetilde{D}^+$ and $\widetilde{D}^-$. 

\begin{example}
  \label{ex:2}
  We select initial, boundary and forcing conditions such that
  \begin{eqnarray*}
    q(x,0)&=&0 \\
    q(0,t)&=&0 \\
    q(1,t)&=&\sin{\qp{2\pi t}}\\
    q_{x}(1,t)&=&0 \\
    h(x,t)&=&2\pi\,(2x-x^2)\cos{\qp{2\pi t}},
  \end{eqnarray*}
  then the analytic solution of the non-homogeneous Airy's equation,
  is given by
  \begin{equation*}
    q(x,t) = (2x-x^2) \,\sin{(2\pi t)}.
  \end{equation*}
\end{example}
We implement these initial, boundary and forcing terms into (\ref{eq:integral-rep-inc-forcing}) and make use of the contour deformations specified in (\ref{zerodeform}). We vary the truncation of the integral and illustrate the behaviour of the approximation in figure~\ref{ex2}. It is important to note that the truncation value of $\theta_{max}$ for $\alpha=0$ is far less than the half-line case. It is the existence of poles in finite interval case that restricts numerical computation for bigger $\theta_{max}$ values.

\begin{figure*}[!ht]
  \caption{A numerical benchmark of Airy's equation posed on a finite interval (\ref{lkdv2}). Initial, boundary and forcing functions are given in example \ref{ex:2}. Here $\alpha = 0$ and $\eta=1/2$. We test the effect of varying the truncation of the contour integrals in (\ref{eq:integral-rep-inc-forcing}) with $(x,t)\in[0,1]\times[0,1]$. Notice that as the contour length increases the error decreases quickly, as expected.}\label{ex2} 
    \begin{subfigure}[t]{0.5\textwidth}
        \centering
        \includegraphics[height=0.7\textwidth]{./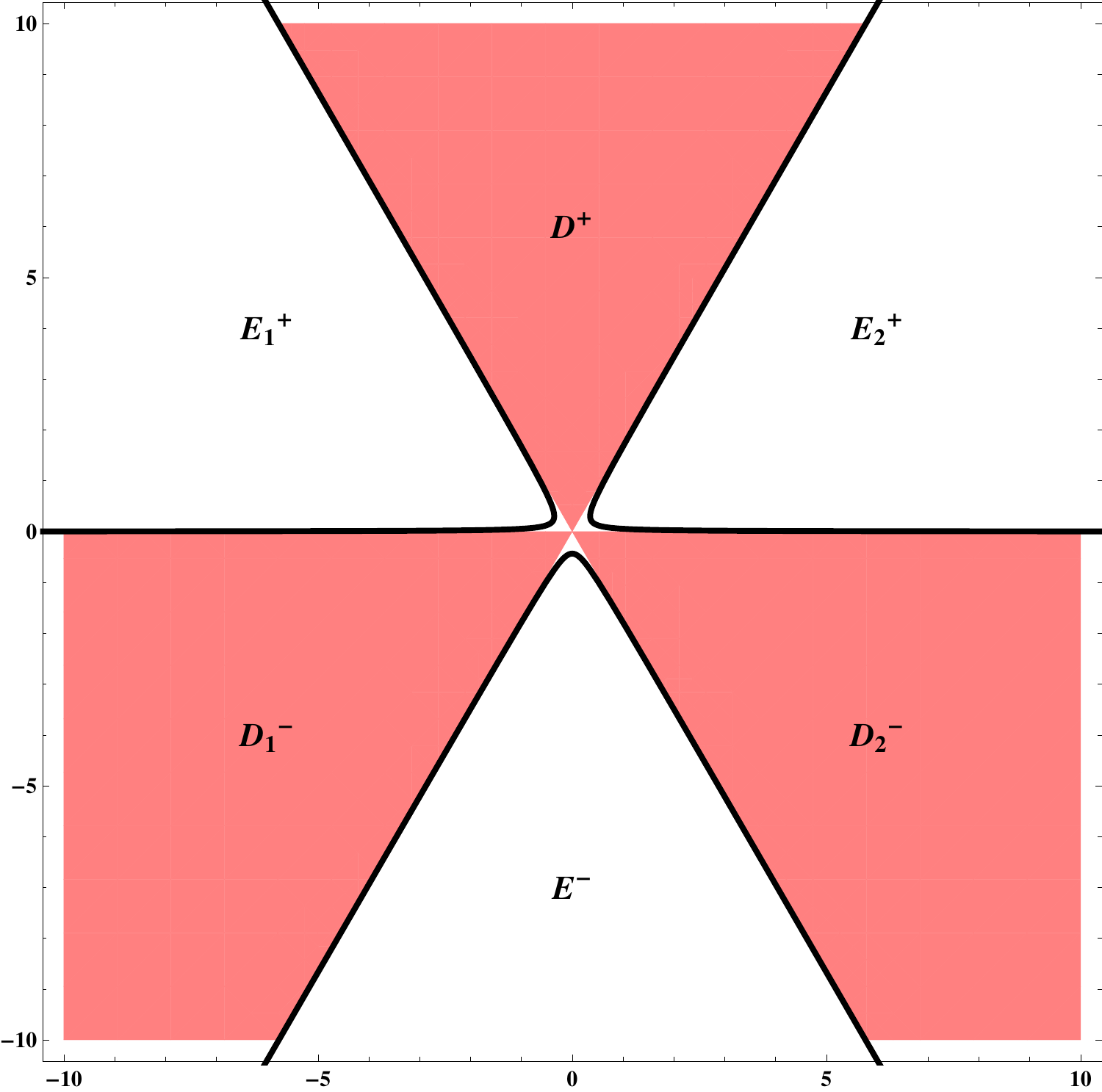}
        \caption{The computational region and the deformation mappings.} 
    \end{subfigure}%
    ~
    \begin{subfigure}[t]{0.5\textwidth}
        \centering
        \includegraphics[height=0.7\textwidth]{./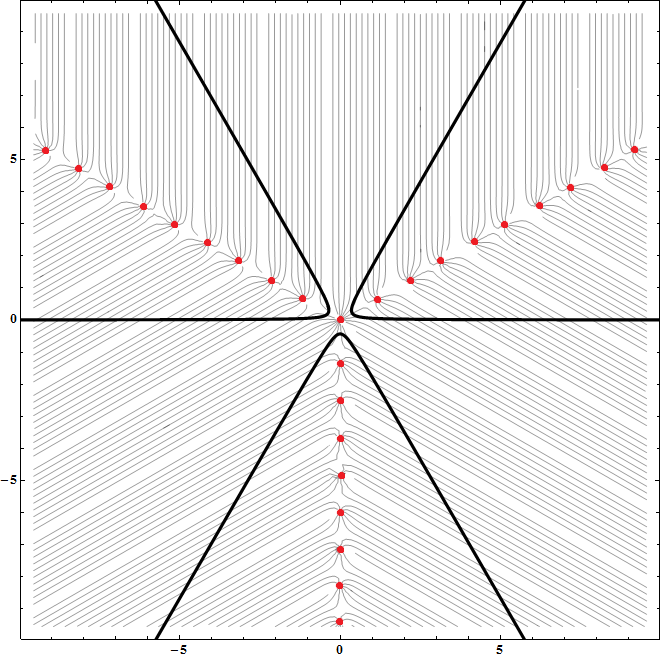}
        \caption{The location of the poles with respect to the deformation mapping.}
    \end{subfigure}
    \\
    \centering
    \begin{subfigure}[t]{0.5\textwidth}
        \centering
        \includegraphics[height=0.7\textwidth]{./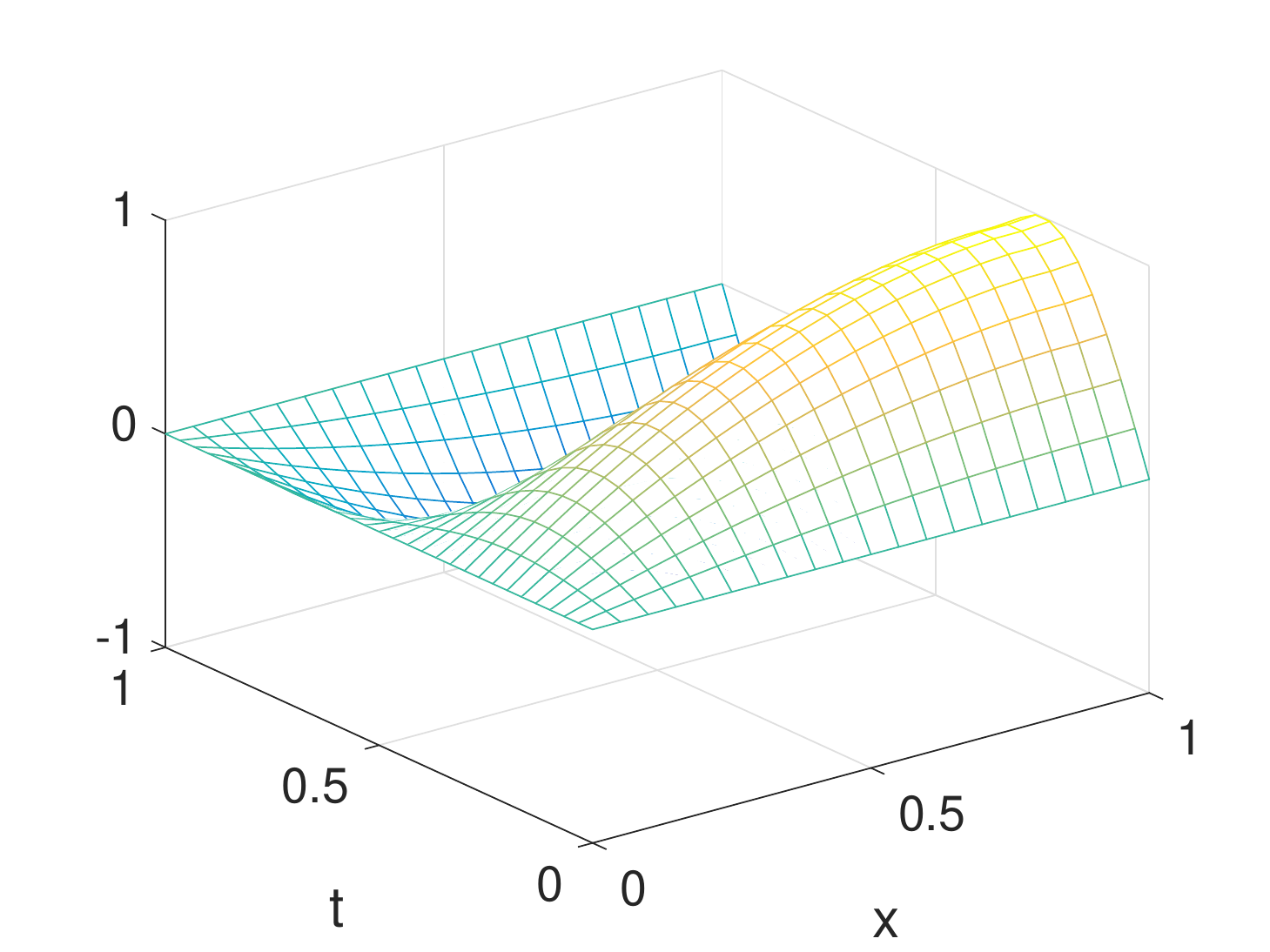}
        \caption{The numerical approximation with $\theta_{max} = 8$.}
    \end{subfigure}%
    ~
    \begin{subfigure}[t]{0.5\textwidth}
        \centering
        \includegraphics[height=0.7\textwidth]{./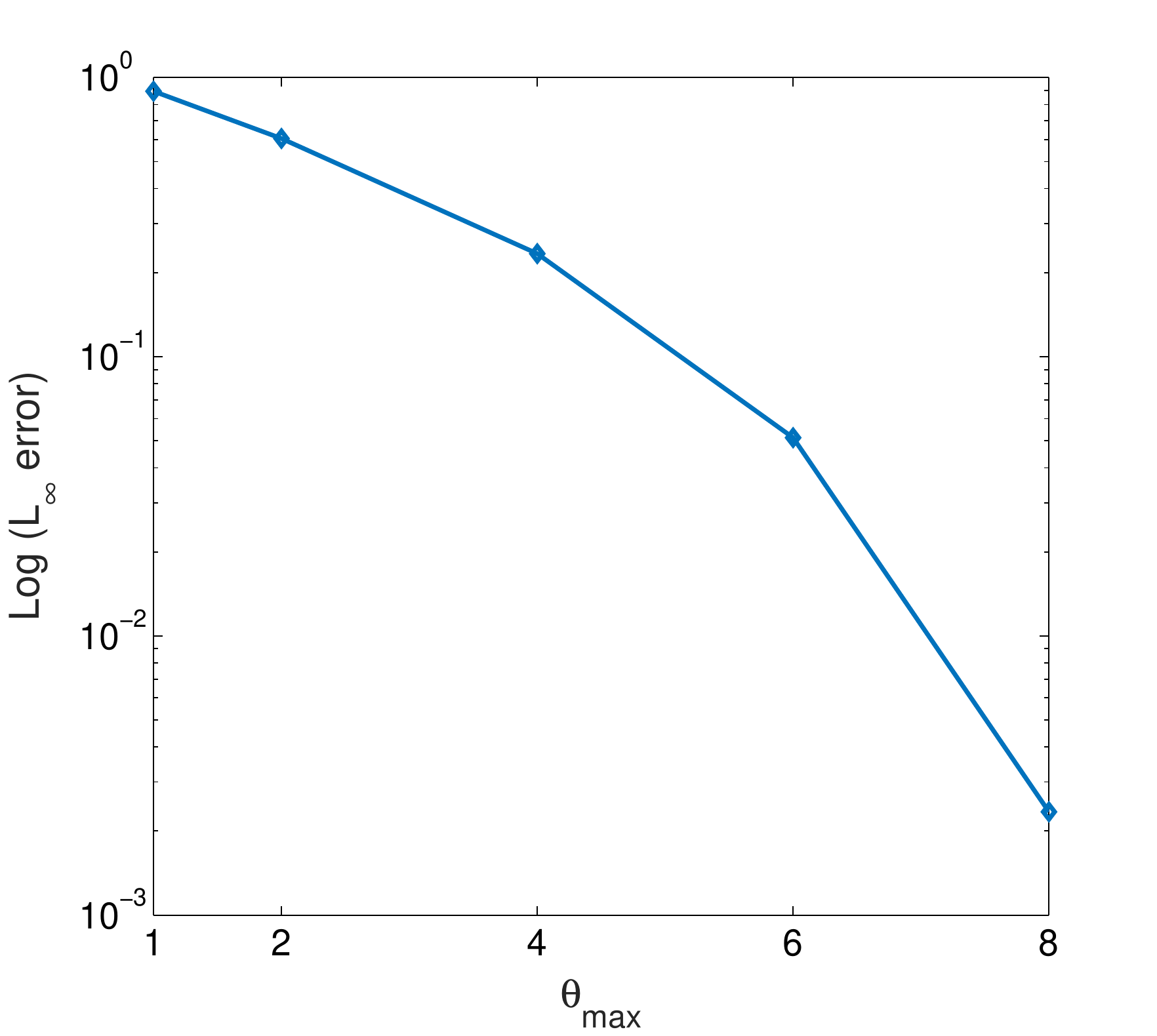}
        \caption{A plot of the maximum error in approximation against the contour truncation value.}
    \end{subfigure}
\end{figure*}

\clearpage
\begin{example}
  \label{ex:3}
  In this example we show the approximate solution for two different choices of initial and boundary conditions. Note in both cases there is \emph{no forcing term}. In the first case we select initial and boundary conditions such that
  \begin{equation}
    \label{eq:alpha0eq1}
    \begin{split}
      q(x,0)&=0\\
      q(0,t)&=\sin{\qp{2 \pi t}}\\
      q(1,t)&=0 \\
      q_{x}(1,t)&=0.
    \end{split}
  \end{equation}
\end{example}
The numerical approximation computed by the deformations defined for $\alpha=0$ above is given in figure \ref{fig:alpha0fig1}.

In the second case we select initial and boundary conditions such that
\begin{equation}
  \label{eq:alpha0eq2}
  \begin{split}
    q(x,0)&=0\\
    q(0,t)&=\sin{\qp{2 \pi t}} \\
    q(1,t)&=\sin{\qp{2 \pi t}} \\
    q_{x}(1,t)&=0.
  \end{split}
\end{equation}
The numerical approximation computed by the deformations defined for $\alpha=0$ above is given in figure \ref{fig:alpha0fig2}.
  \begin{figure}[H]
    \caption{\label{ex2sol} Examples of numerical approximations to Airy's equation using the Fokas transform method. }
    \centering
    \begin{subfigure}[t]{0.5\textwidth}
      \centering
      \includegraphics[height=0.7\textwidth,keepaspectratio=true]{./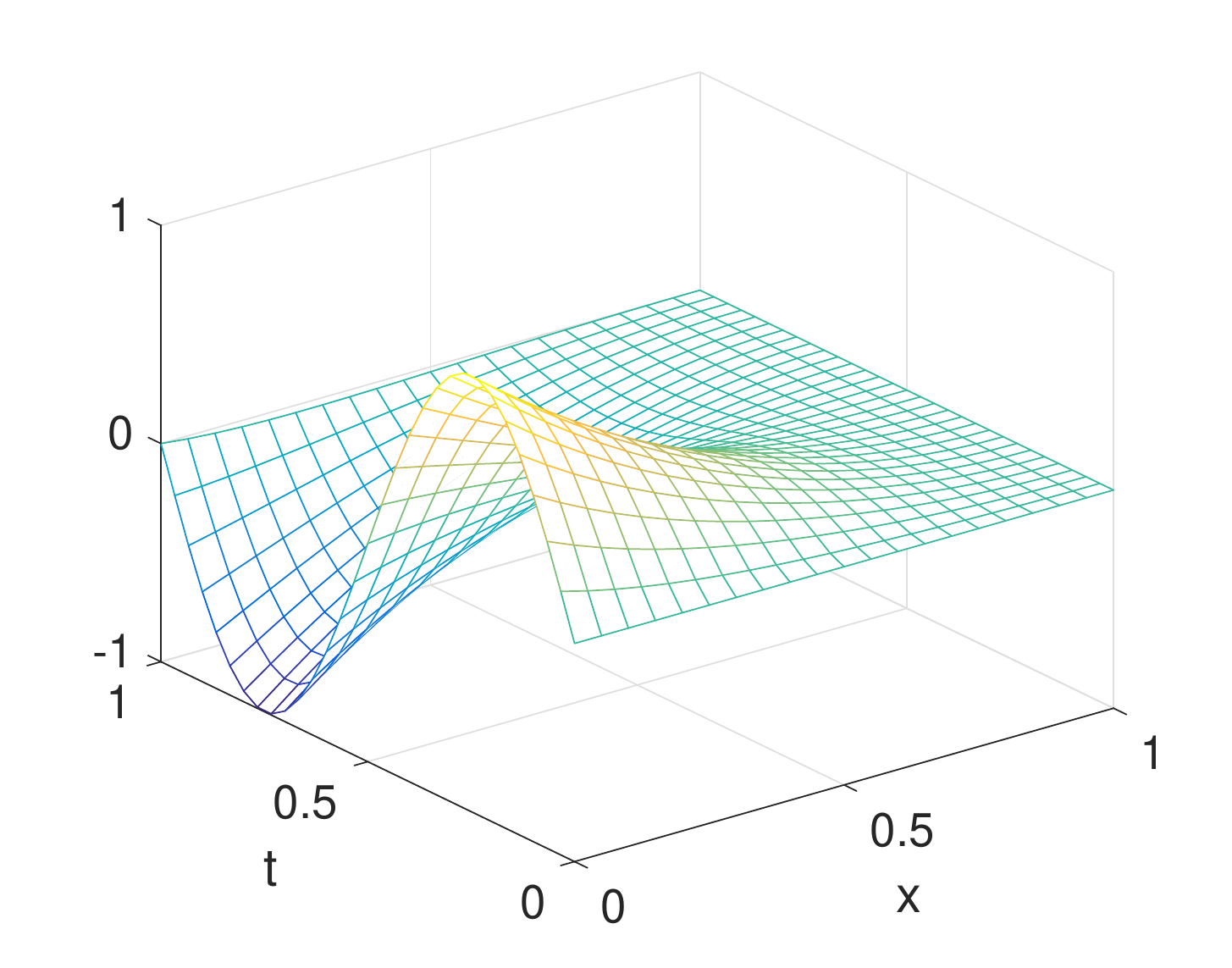}
      \caption{\label{fig:alpha0fig1} The solution given by the initial conditions (\ref{eq:alpha0eq1})}
    \end{subfigure}%
    \begin{subfigure}[t]{0.5\textwidth}
      \centering
      \includegraphics[height=0.7\textwidth,keepaspectratio=true]{./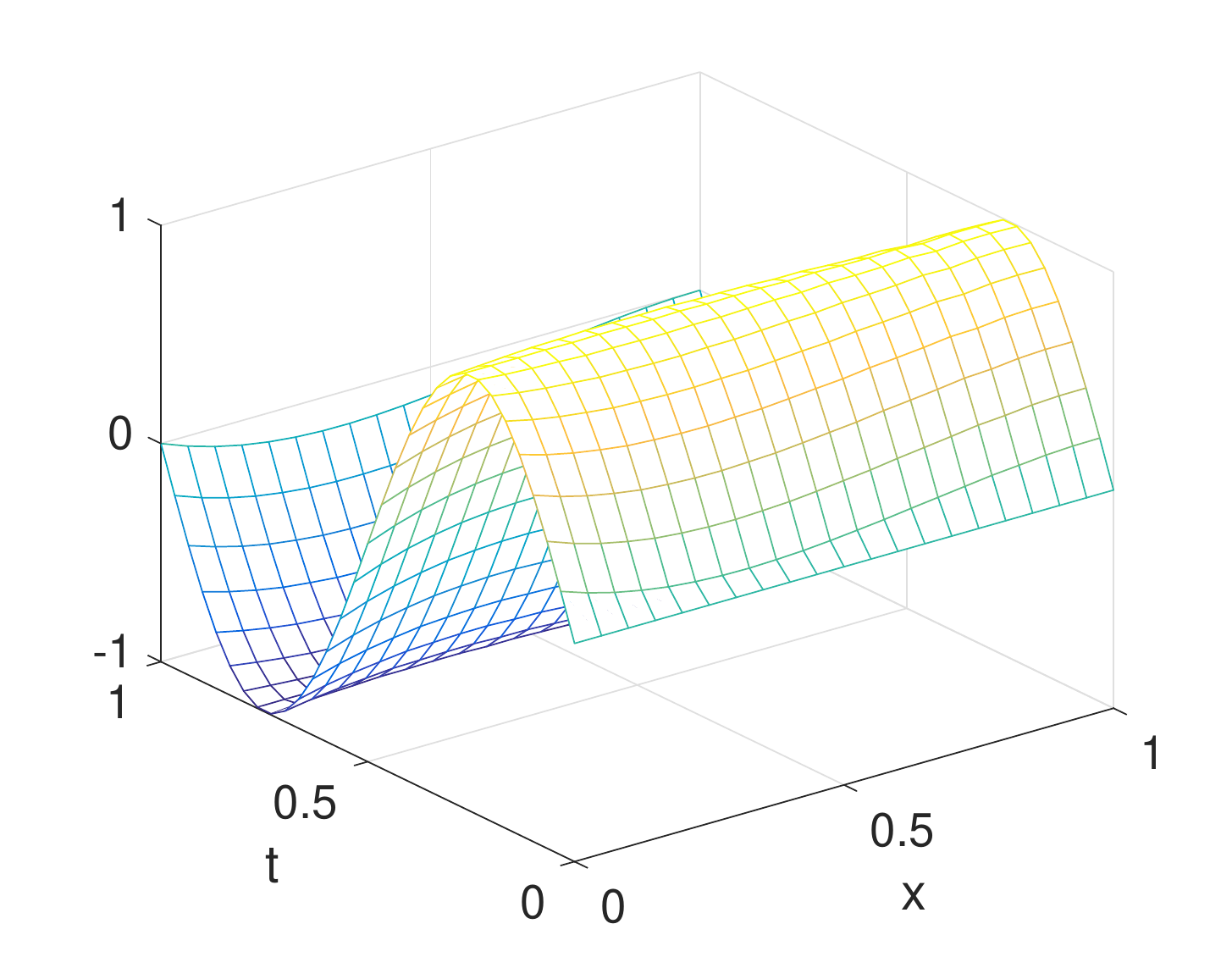}
      \caption{\label{fig:alpha0fig2} The solution given by the initial conditions (\ref{eq:alpha0eq2}).}
    \end{subfigure}%
  \end{figure}

\subsection{When $0<\alpha<1$}
As can be seen from figure \ref{fig:pole-location-1} the position of the poles changes when $\alpha$ varies between zero and one. We choose the complex contour deformations defined by the following functions:
\begin{equation}\label{deformex1}
k_{1}(\theta)= \tau^2 k_{3}(\theta), \quad
k_{2}(\theta)= \tau k_{3}(\theta), \quad
k_{3}(\theta)=-i \eta \sin{\qp{\frac{\pi}{3}-i\theta}}
\end{equation}
where $k_{1}(\theta)$ and $k_{2}(\theta)$ deform the boundary of the domain $E^{+}$ and $k_{3}(\theta)$ deforms the boundary of $E^{-}$ (see figure \ref{figE}). The $\eta$ parameter determines the distance between the curve and the origin.

\begin{figure}[ht!]
  \caption{
    \label{fig:pole-location-1}
    The location of the poles for the solution to Airy's equation for some specific $\alpha \in (0,1)$ with $\eta=1/2$.} 
    \centering
    \begin{subfigure}[t]{0.5\textwidth}
        \centering
        \includegraphics[height=0.7\textwidth]{./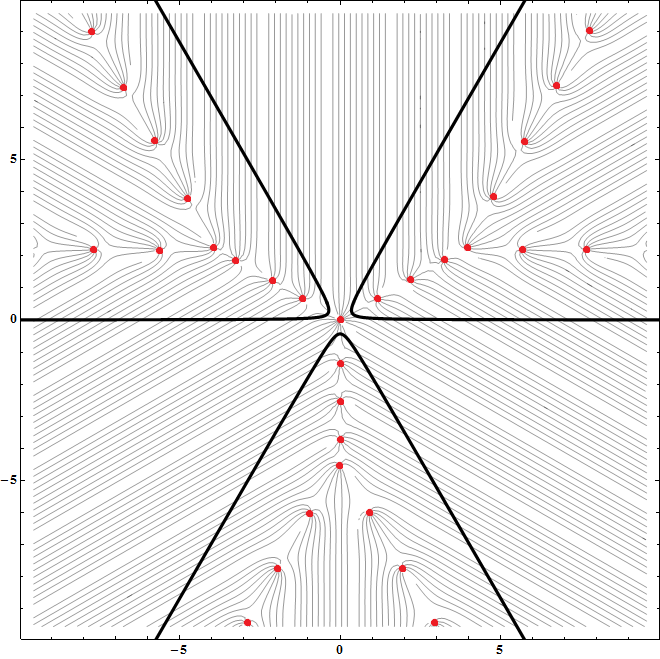}
        \caption{$\alpha = 0.001$}
    \end{subfigure}%
    ~
    \begin{subfigure}[t]{0.5\textwidth}
        \centering
        \includegraphics[height=0.7\textwidth]{./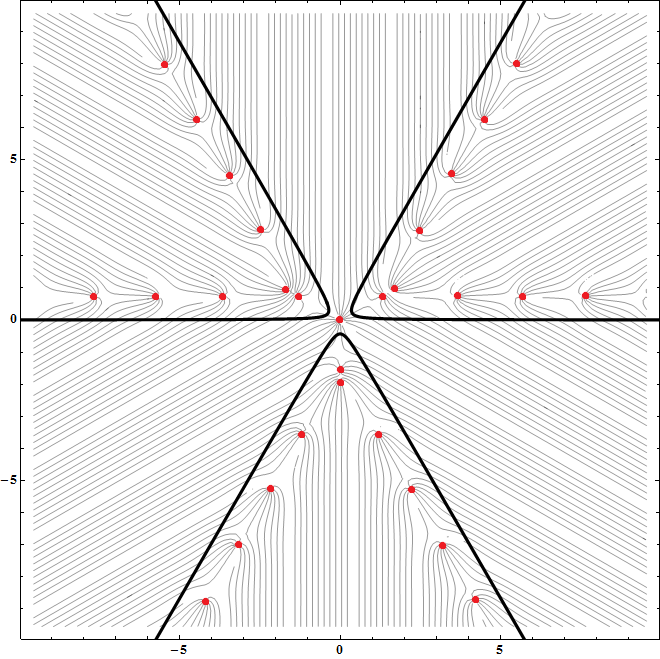}
        \caption{$\alpha = 0.1$}
    \end{subfigure}
    \\
    \centering
    \begin{subfigure}[t]{0.5\textwidth}
        \centering
        \includegraphics[height=0.7\textwidth]{./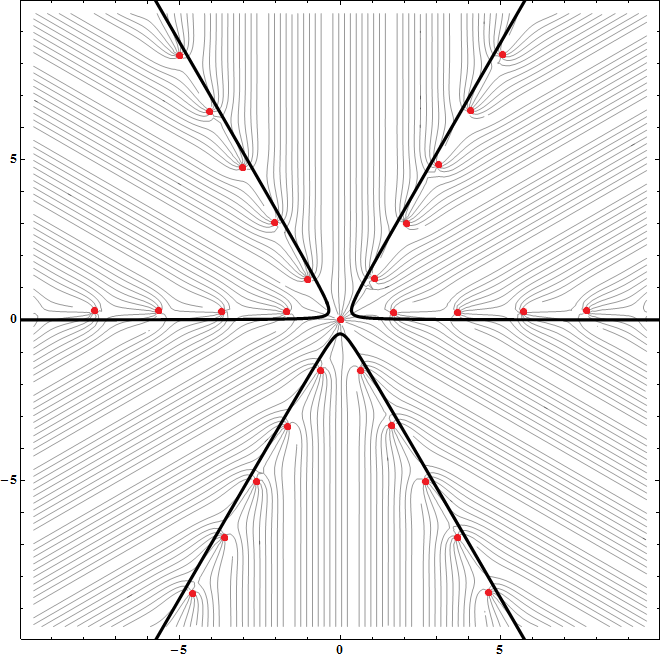}
        \caption{$\alpha = 0.5$}
    \end{subfigure}%
    ~
    \begin{subfigure}[t]{0.5\textwidth}
        \centering
        \includegraphics[height=0.7\textwidth]{./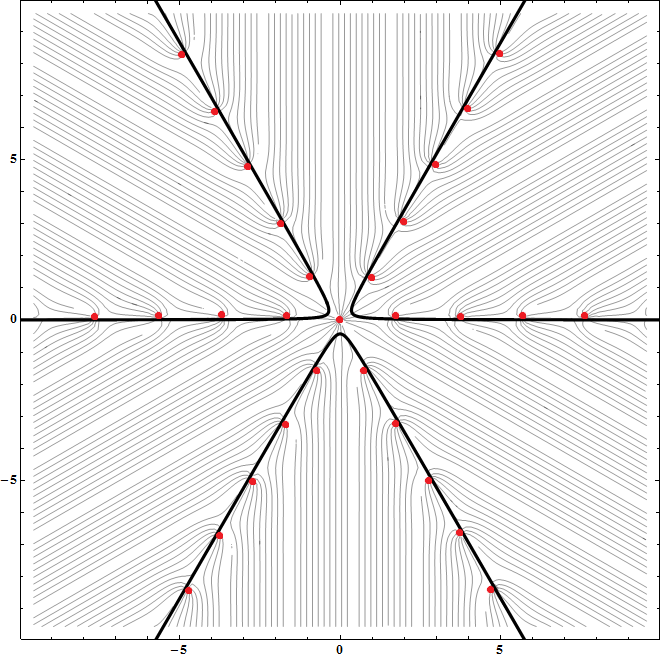}
        \caption{$\alpha = 0.8$}
    \end{subfigure}
\end{figure}

We will proceed as in the previous subsection, validating these deformation mappings for $0<\alpha<1$ by adding a forcing function, computing the $q(x,t)$ and then investigating the behaviour of the solution for various initial and boundary conditions. Note also that the truncation value of $\theta_{max}$ for $0<\alpha<1$ is the same as the $\alpha=0$ case, since the deformations functions used in both cases are the same.

\begin{example}
  \label{ex:5}
  Here we examine $\alpha = 1/3$. We select initial, boundary and forcing conditions such that
  \begin{eqnarray*}
    q(x,0)&=&0 \\
    q(0,t)&=&0 \\
    q(1,t)&=&2 \, \sin{(2\pi t)}\\
    q_{x}(1,t)&=&\frac{1}{3}q_{x}(0,t)\\
    h(x,t)&=&2\pi\,(3x-x^2)\cos{(2\pi t)}
  \end{eqnarray*}
  then the analytic solution of non-homogeneous Airy's equation is given by
  \begin{equation*}
    q(x,t) = (3x-x^2) \,\sin{(2\pi t)}.
  \end{equation*}
\end{example}
We substitute these initial, boundary and forcing terms into (\ref{eq:integral-rep-inc-forcing}) and make use of the contour deformations specified in (\ref{zerodeform}). We vary the truncation of the integral and illustrate the behaviour of the approximation in figure \ref{ex1alp05}.

\clearpage

\begin{figure}[h!]
    \centering
    \caption{\label{ex1alp05} A numerical benchmark of Airy's equation posed on a finite interval (\ref{lkdv2}). Initial, boundary and forcing functions are given in example \ref{ex:5}. Here $\alpha = 1/3$ and $\eta=1/2$. We test the effect of varying the truncation of the contour integrals in (\ref{eq:integral-rep-inc-forcing}) with $(x,t)\in[0,1]\times[0,1]$. Notice that as the contour length increases the error decreases quickly, as expected.}
    \begin{subfigure}[t]{0.5\textwidth}
        \centering
        \includegraphics[height=0.7\textwidth]{./fig/regions_with_names_and_contours_alpha0.pdf}
        \caption{The computational region and the deformation mappings.}
    \end{subfigure}%
    ~
    \begin{subfigure}[t]{0.5\textwidth}
        \centering
        \includegraphics[height=0.7\textwidth]{./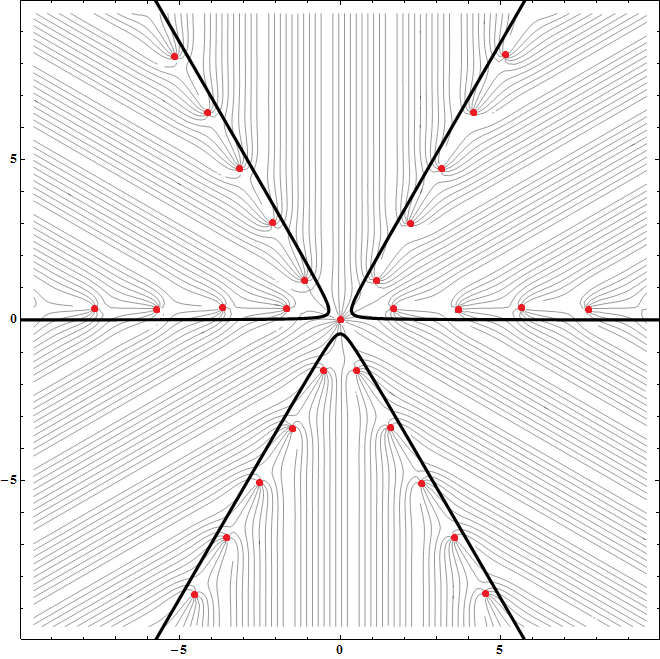}
        \caption{The location of the poles with respect to the deformation mappings.}
    \end{subfigure}
    \\
    \centering
    \begin{subfigure}[t]{0.5\textwidth}
        \centering
        \includegraphics[height=0.7\textwidth]{./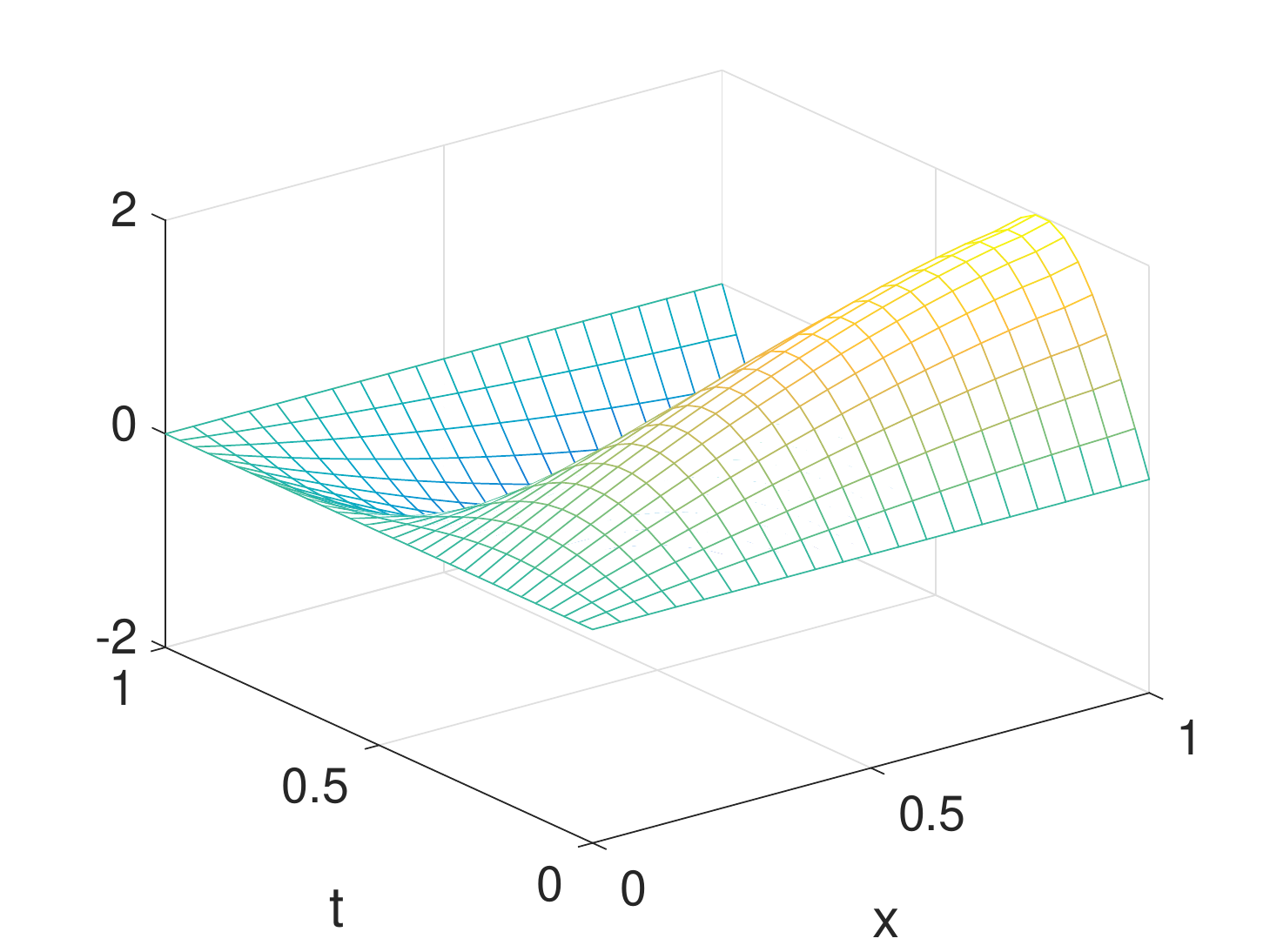}
        \caption{The numerical approximation with $\theta_{max} = 8$}
    \end{subfigure}%
    ~
    \begin{subfigure}[t]{0.5\textwidth}
        \centering
        \includegraphics[height=0.7\textwidth]{./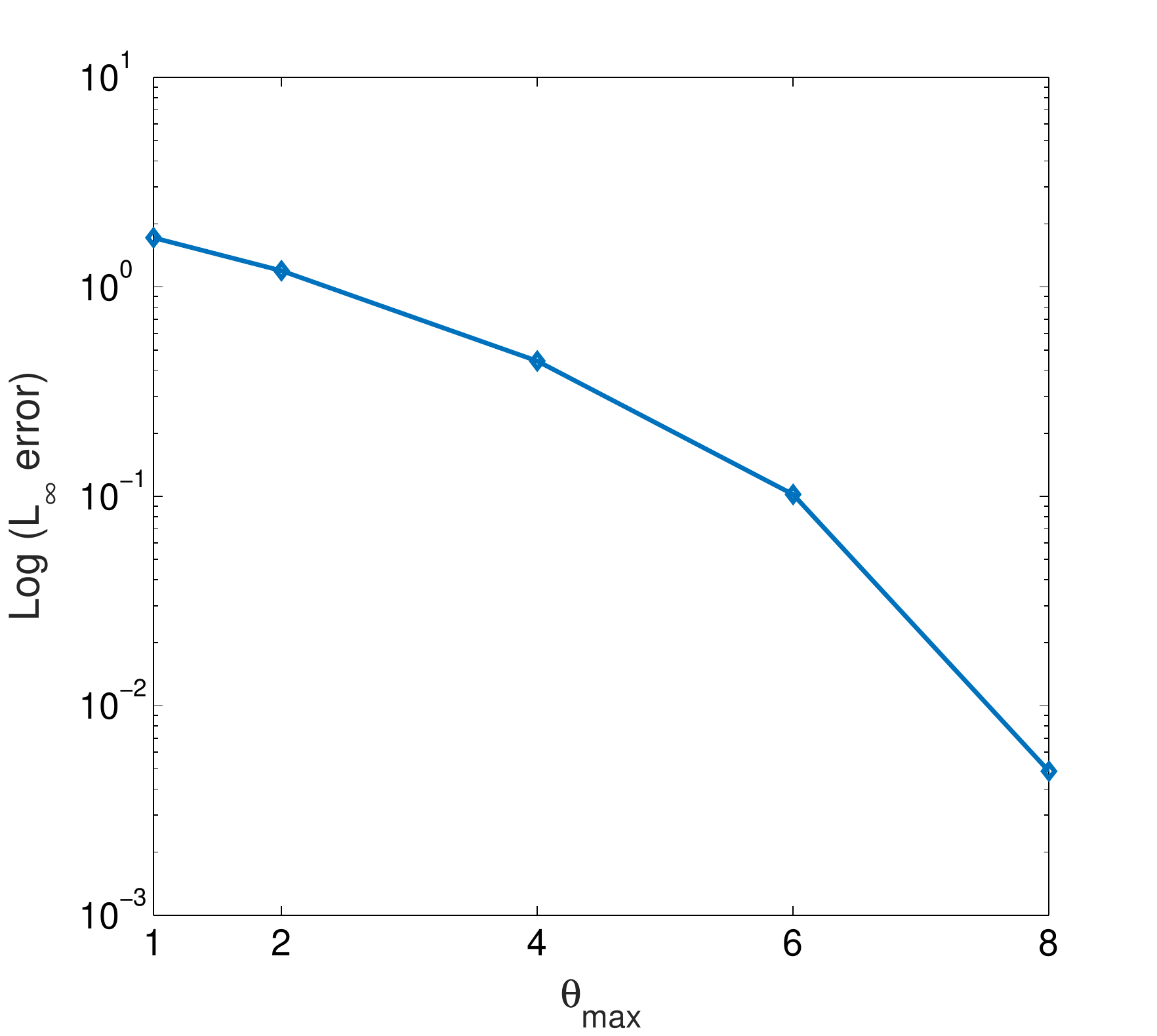}
        \caption{A plot of the maximum error in approximation against the contour truncation value.}
    \end{subfigure}
\end{figure}
\begin{example}
  In this example we show the approximate solution for two different choices of initial and boundary conditions. Note in both cases there is \emph{no forcing term}. In the first case we select initial and boundary conditions such that
  \begin{equation}
    \label{eq:alpha13eq1}
    \begin{split}
      q(x,0)&=1\\
      q(0,t)&=\cos{(2 \pi t)}\\
      q(1,t)&=\cos{(2 \pi t)}\\
      q_{x}(1,t)&= \frac 12 \, q_{x}(0,t).
    \end{split}
  \end{equation}
  The numerical approximation computed by the deformations defined for $\alpha=1/2$ above is given in figure \ref{fig:alpha13fig1}.

  In the second case we select initial and boundary conditions such that
  \begin{equation}
    \label{eq:alpha13eq2}
    \begin{split}
      q(x,0)&=e^{-12x}\\
      q(0,t)&=\cos{(2 \pi t)}\\
      q(1,t)&=\sin{(2 \pi t)}\\
      q_{x}(1,t)&=\frac{1}{e} \, q_{x}(0,t).
    \end{split}
  \end{equation}
  The numerical approximation computed by the deformations defined for $\alpha=1/e$ above is given in figure \ref{fig:alpha13fig2}.

  \begin{figure}[h!]
    \caption{\label{ex2sol} Examples of numerical approximations to Airy's equation using the Fokas transform method. }
    \centering
    \begin{subfigure}[t]{0.5\textwidth}
      \centering
      \includegraphics[height=0.7\textwidth,keepaspectratio=true]{./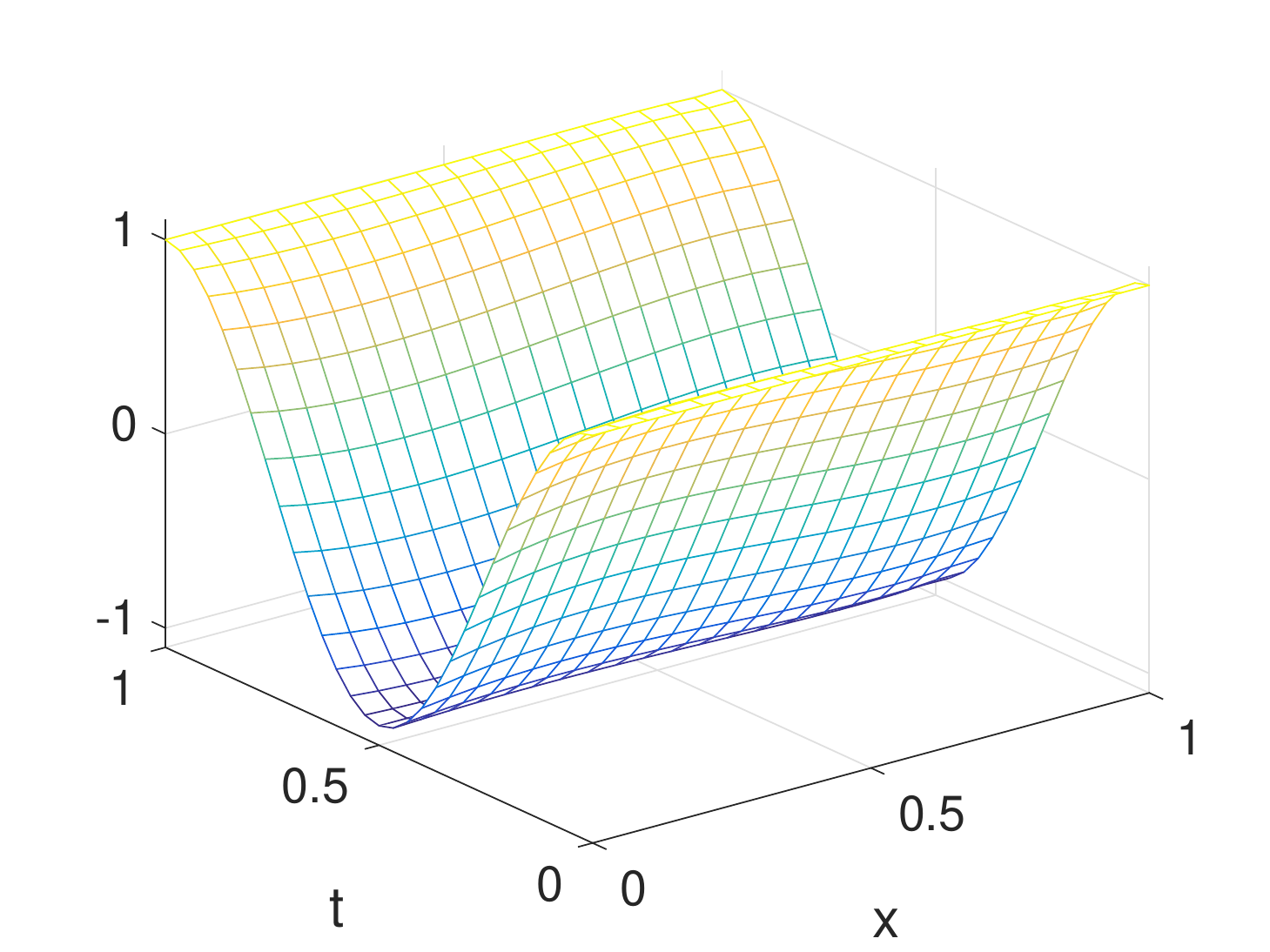}
      \caption{\label{fig:alpha13fig1} The solution given by the initial conditions (\ref{eq:alpha13eq1}).}
    \end{subfigure}%
    \begin{subfigure}[t]{0.5\textwidth}
      \centering
      \includegraphics[height=0.7\textwidth,keepaspectratio=true]{./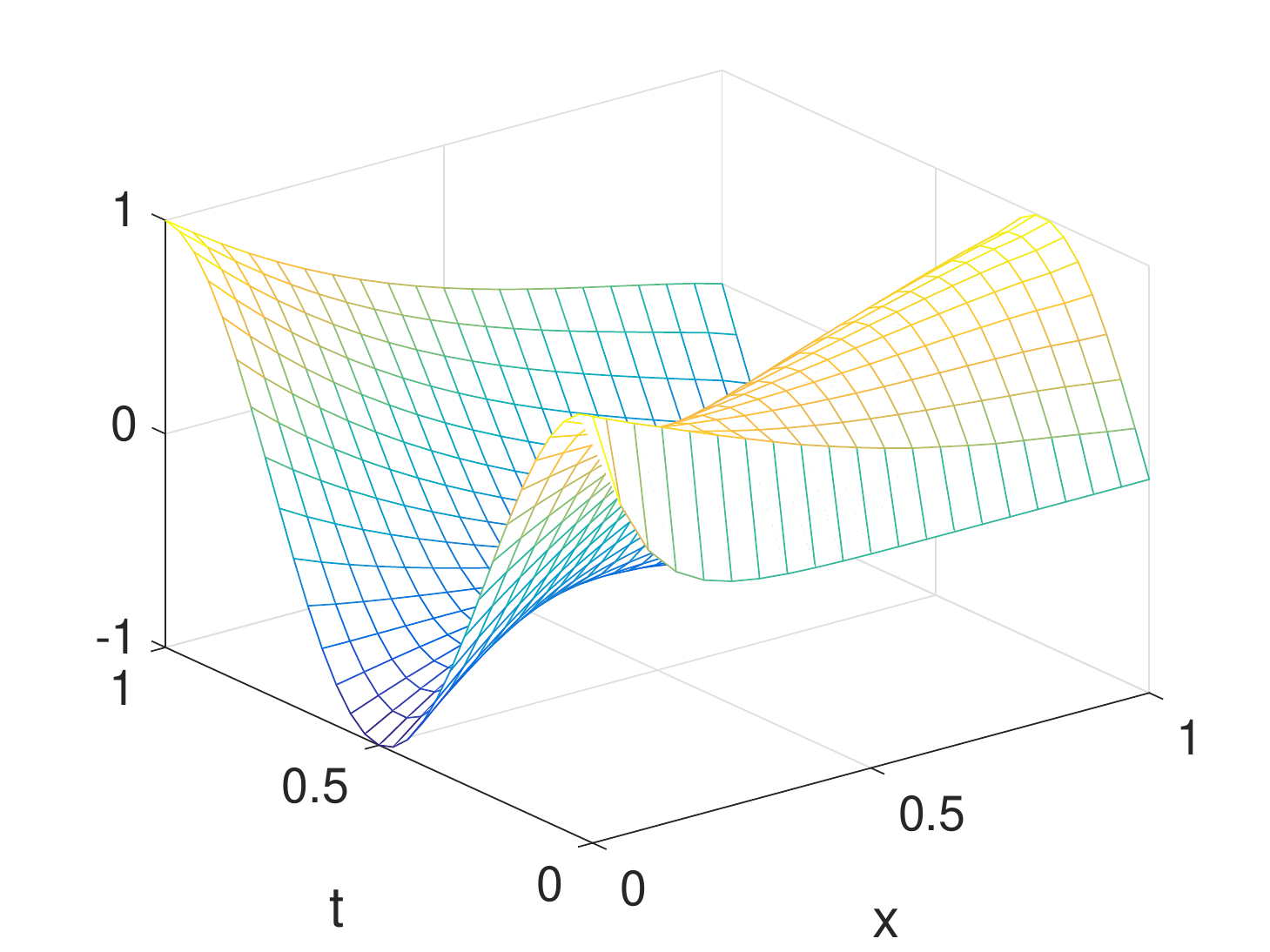}
      \caption{\label{fig:alpha13fig2} The solution given by the initial conditions (\ref{eq:alpha13eq2}).}
    \end{subfigure}%
  \end{figure}

\end{example}

\subsection{When $\alpha=1$}

  \begin{figure}[ht]
  \caption{
    \label{fig:pole-location}
    The location of the poles for the solution to Airy's equation when $\alpha=1$. We also show the behaviour of the deformation functions $k_1,k_2,k_3$ for some different values of $\gamma$ and $\beta$. The solid black line is the graph of the deformation function given by equation (\ref{alpha1deform})}
    \centering
    \begin{subfigure}[t]{0.5\textwidth}
        \centering
        \includegraphics[height=0.7\textwidth]{./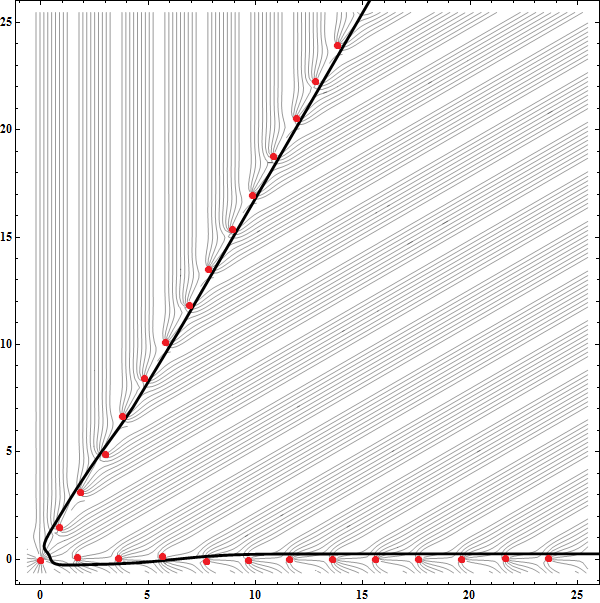}
        \caption{$\beta = 3$.}
    \end{subfigure}%
    \hfill
    \begin{subfigure}[t]{0.5\textwidth}
        \centering
        \includegraphics[height=0.7\textwidth]{./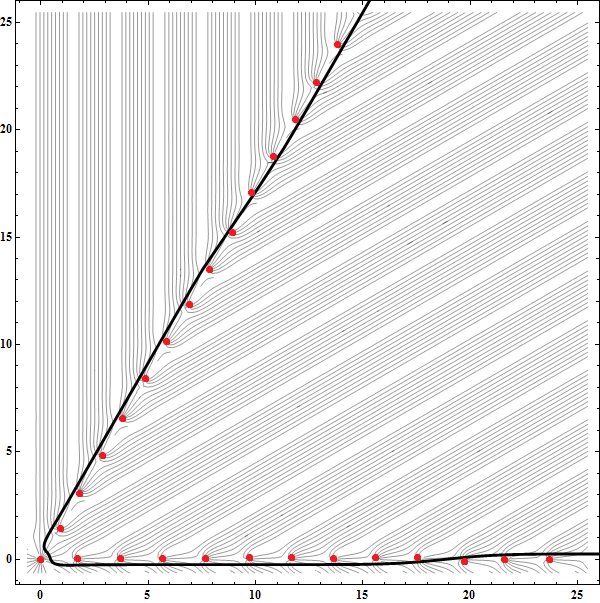}
        \caption{$\beta = 9$.}
    \end{subfigure}
    \\
    \begin{subfigure}[t]{0.47\textwidth}
        \centering
        \includegraphics[height=0.7\textwidth]{./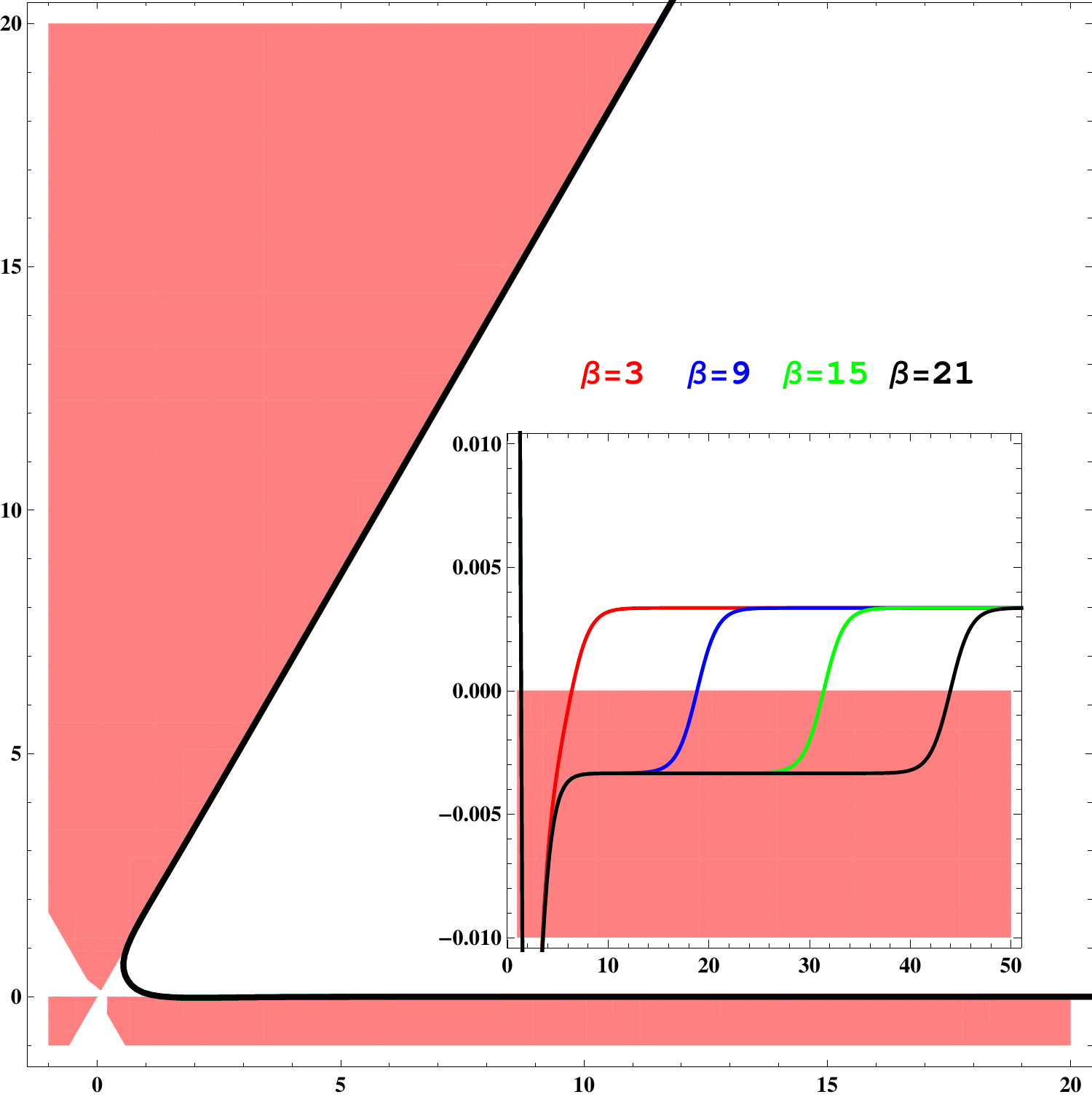}
        \caption{The deformation $k_2(\theta)$ for various values of $\beta$.}
    \end{subfigure}
    \hfill
    \begin{subfigure}[t]{0.47\textwidth}
        \centering
        \includegraphics[height=0.7\textwidth]{./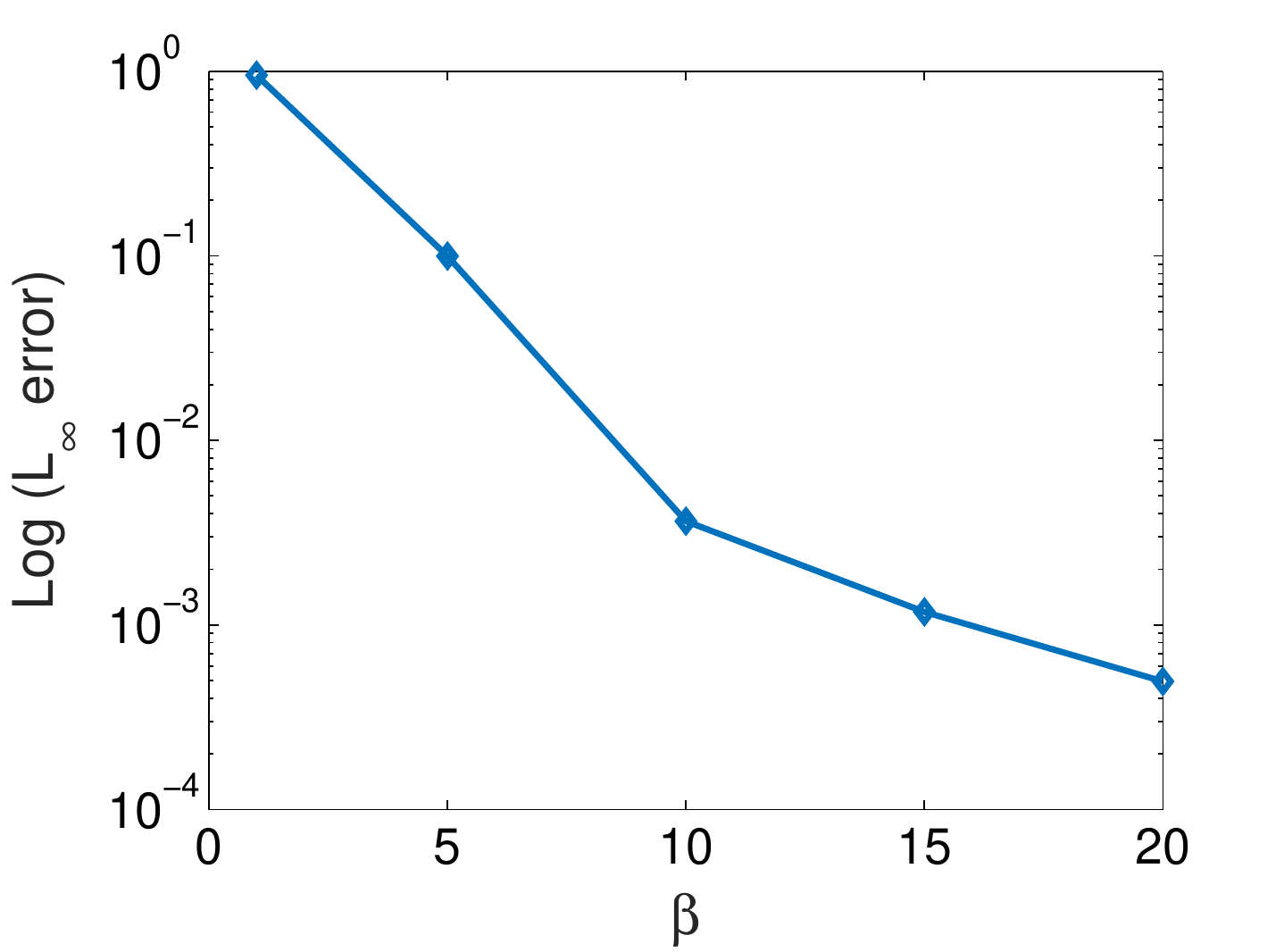}
        \caption{A plot of the maximum error in approximation against the number of poles added to the computation with $\theta_{max}=50$ and $(x,t)\in [0,1]\times[0,1]$.}
    \end{subfigure}%
\end{figure}

As can be seen from figure \ref{fig:pole-location} the problem is extremely challenging when $\alpha=1$. The poles lie on the boundaries of the regions we wish to integrate. We make use of the following deformation function:
\begin{eqnarray}\label{alpha1deform}
\begin{split}
k_{3}(\theta)=-\theta + & \gamma i \qc{\tanh{\qp{\beta \frac{\pi}{3}-\theta}}+\tanh{\qp{\beta \frac{\pi}{3}+\theta}} - 1} - i \qc{\tanh{\qp{\frac{1}{2}-2\theta}}+\tanh{\qp{\frac{1}{2}+2\theta}}} \\
               & + \frac{\sqrt{3}}{2} i \theta \tanh{\qp{2 \theta}}\qc{\tanh{\qp{\frac{1}{2}-2\theta}}+\tanh{\qp{\frac{1}{2}+2\theta}} - 2},
\end{split}
\end{eqnarray}
\begin{equation*}
k_{1}(\theta)= \tau^2 \, k_{3}(\theta), \quad k_{2}(\theta)= \tau \, k_{3}(\theta)
\end{equation*}
{where $\gamma$ and $\beta$ are the positive real constants. The parameter $\gamma$ determines the distance between the curve and the poles. We will set $\gamma=\min{\{\frac{1}{200 \sqrt[3]{t}}, 0.5\}}$ to guarantee minimum passage through $D^{\pm}$ regions in which $e^{ik^3 t}$ is unbounded for larger time values. The parameter $\beta$ determines the number of poles added to the numerical computation by shifting the curve from $D^{\pm}$ regions to the $E^{\pm}$ regions. As mentioned before, the poles are placed on the $\tau^j\mathbb{R}$ lines for $j=0,1,2$ when $\alpha=1$.

  \clearpage
\begin{example}
  Here we examine the case $\alpha = 1$. We select initial, boundary and forcing conditions such that
  \begin{equation}
    \begin{split}
      q(x,0)&=\sin{(2\pi x)}\\
      q(0,t)&=0 \\
      q(1,t)&=0 \\
      q_{x}(1,t)&=q_{x}(0,t)\\
      h(x,t)&=-(2\pi)^3\cos{(2\pi x)},
    \end{split}
  \end{equation}
  then the analytic solution of the non-homogeneous Airy's equation is given by
  \begin{equation*}
    q(x,t) = \sin{(2\pi x)}.
  \end{equation*}
We implement these initial and boundary values into (\ref{eq:integral-rep-inc-forcing}) and make use of the contour deformations specified in (\ref{alpha1deform}). We vary the truncation of the integral and illustrate the behaviour of the approximation in figure (\ref{fig:alpha1error}). We get the numerical solution given in figure (\ref{fig:alpha1soln}). The truncation value of $\theta_{max}$ in $\alpha=1$ case is larger than $\alpha=0$ and $0<\alpha<1$ cases. In this case, after including contributions from the poles located on the $\tau^j\mathbb{R}$ lines for $j=0,1,2$, the contour stays inside the $E^{\pm}$ regions in which the integrands of the integral representation are bounded.

\begin{figure}[h!]
  \centering
  \caption{\label{fig:alpha1} A numerical benchmark of Airy's equation posed on a finite interval (\ref{lkdv2}). Initial, boundary and forcing functions are given in example \ref{ex:5}. Here $\alpha = 1$. We test the effect of varying the truncation of the contour integrals in (\ref{alpha1deform}) with $\gamma=\min{\{\frac{1}{200 \sqrt[3]{t}}, 0.5\}}$, $\beta=9$ and $(x,t) \in [0,1]\times[0,10]$. Notice that as the contour length increases the error decreases quickly, as expected.
}
    \begin{subfigure}[t]{0.47\textwidth}
        \centering
        \includegraphics[height=0.7\textwidth]{./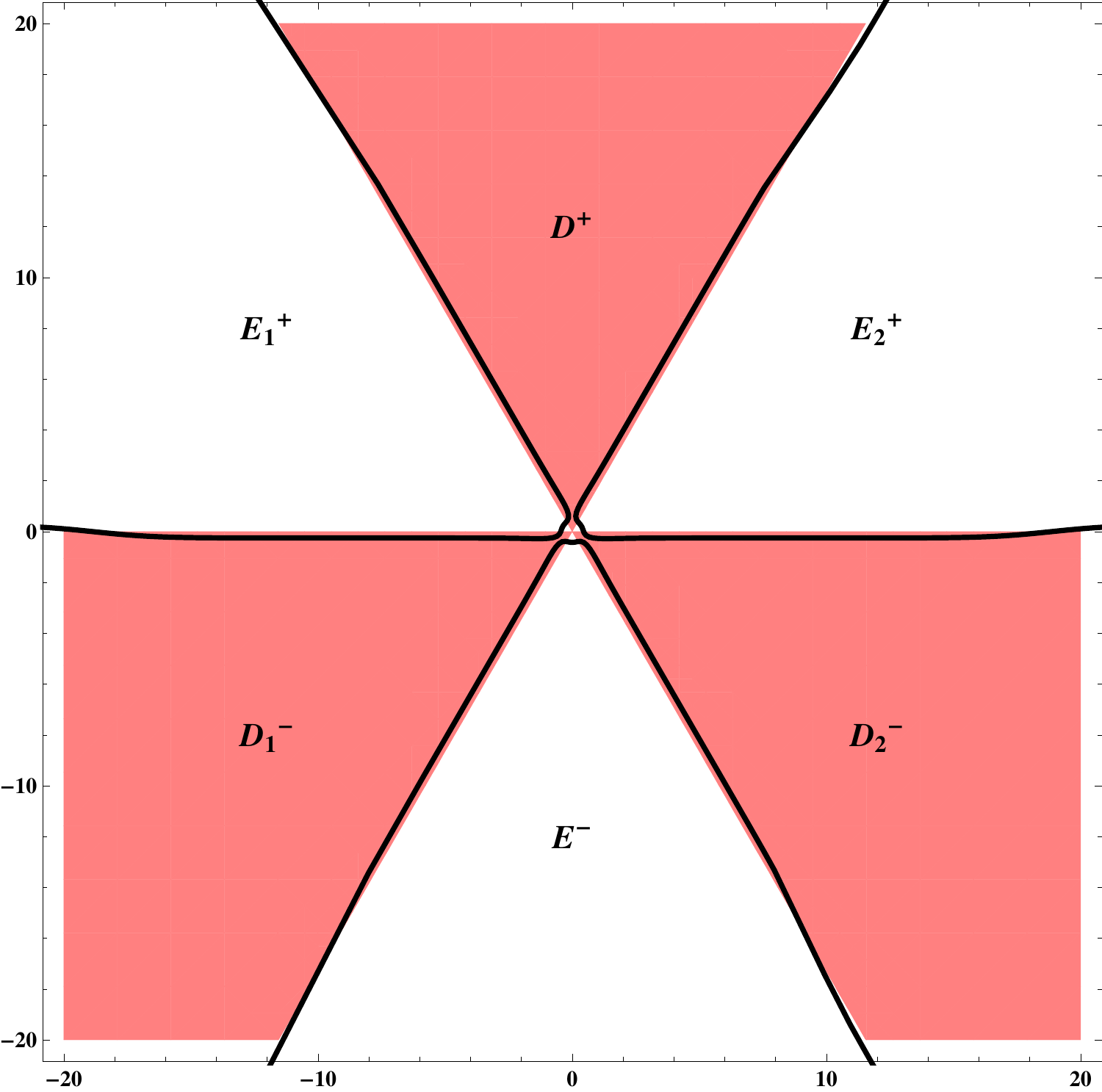}
        \caption{The computational region and deformation mappings.}
    \end{subfigure}%
    \hfill
    \begin{subfigure}[t]{0.47\textwidth}
        \centering
        \includegraphics[height=0.7\textwidth]{./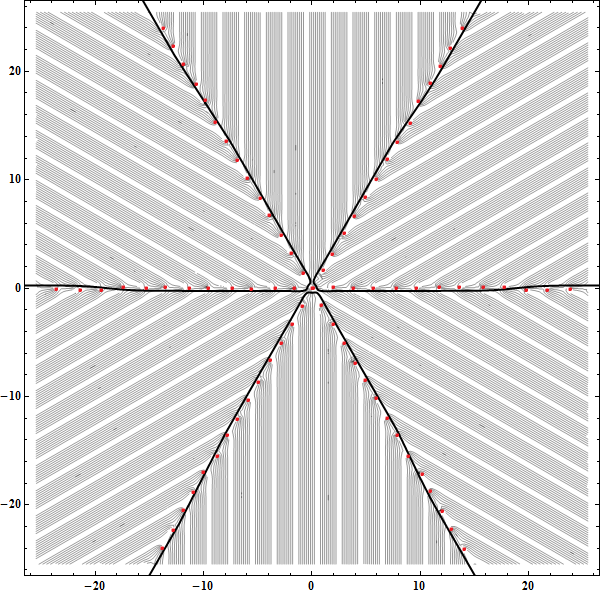}
        \caption{The location of the poles with respect to the deformation mappings.}
    \end{subfigure}%
    \\
    \begin{subfigure}[t]{0.47\textwidth}
        \centering
        \includegraphics[height=0.7\textwidth]{./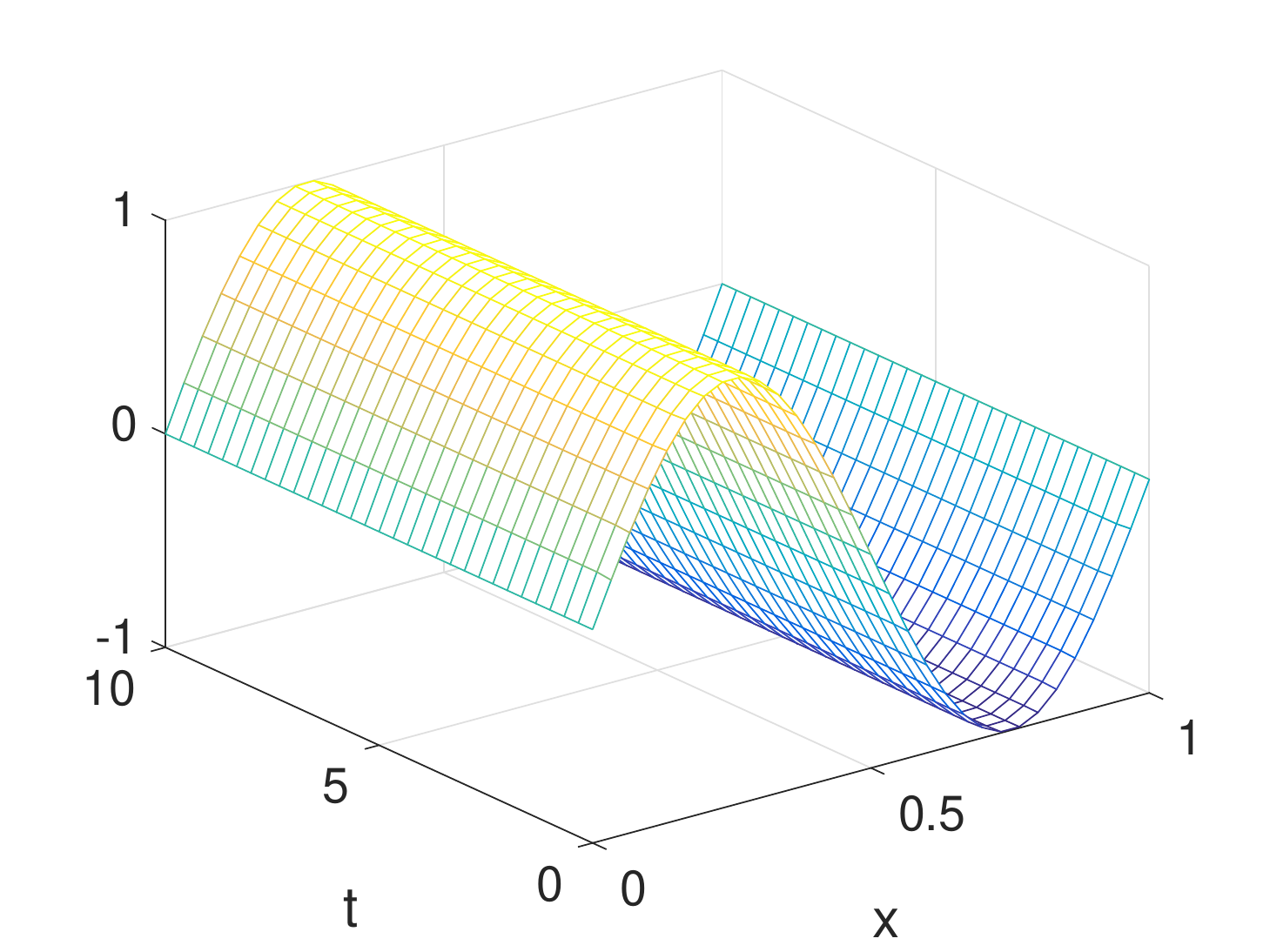}
        \caption{\label{fig:alpha1soln}The numerical approximation with $\theta_{max} = 50$.}
    \end{subfigure}%
    \hfill
    \begin{subfigure}[t]{0.47\textwidth}
        \centering
        \includegraphics[height=0.7\textwidth]{./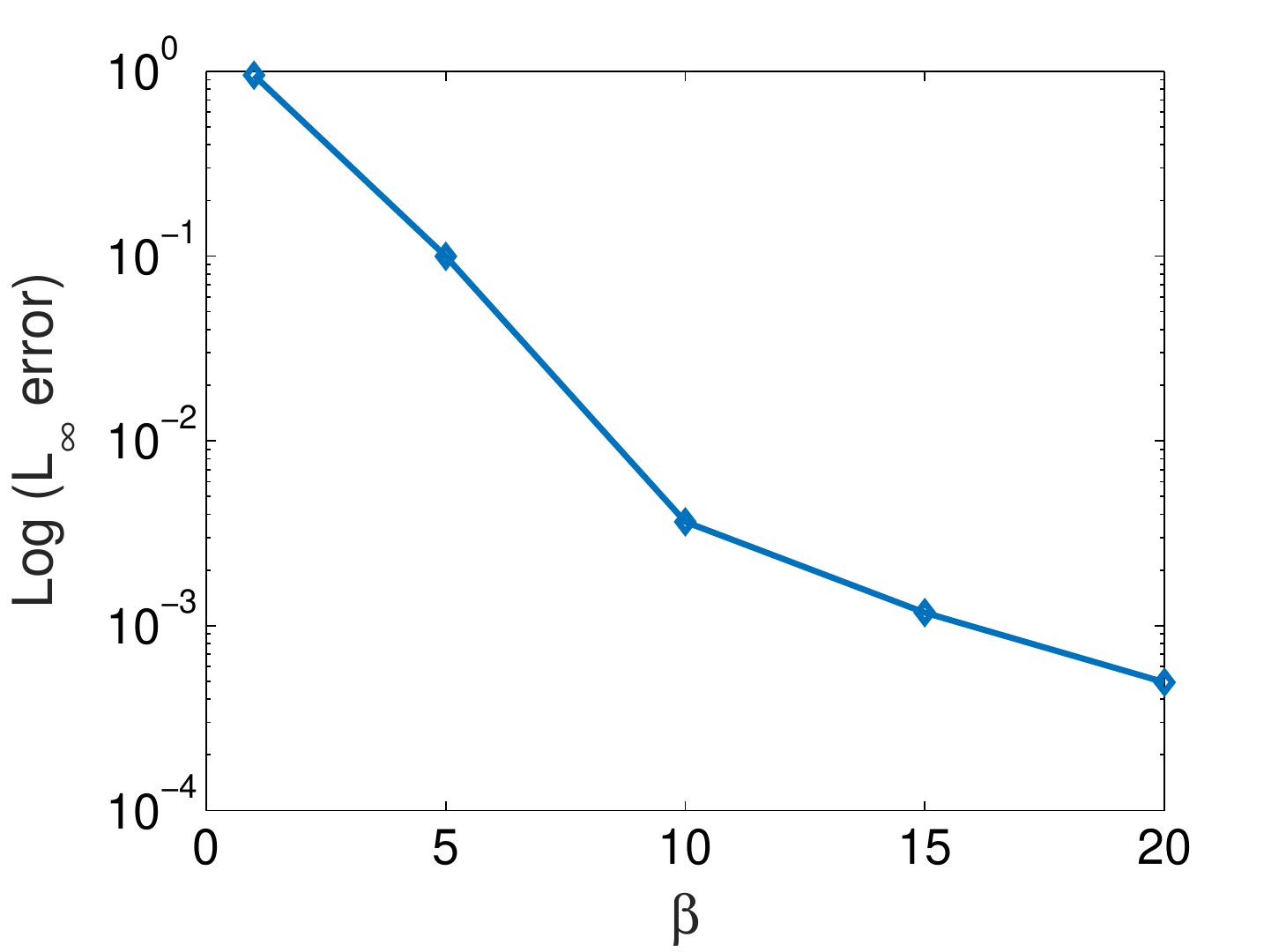}
        \caption{\label{fig:alpha1error}A plot of the maximum error in approximation against the contour truncation value.}
    \end{subfigure}
\end{figure}
\end{example}
\begin{example}
  \label{ex:7}
  In this example we show the approximate solution for a specific choice of initial and boundary conditions. We will examine the effect of varying $\alpha$ on the solution. Note in all cases there is \emph{no forcing term}. We select initial and boundary conditions such that
  \begin{equation}
    \label{eq:alpha1eq1}
    \begin{split}
      q(x,0)&=\sin(2 \pi x)\\
      q(0,t)&=0\\
      q(1,t)&=0\\
      q_{x}(1,t)&=\alpha q_{x}(0,t).
    \end{split}
  \end{equation}
  The numerical approximation computed by the deformations defined for $\alpha=1$ above is given in figure \ref{fig:alpha1fig1}. A slice of the solution is taken at time $t=0.1$ for various values of $\alpha$ and is shown in figure \ref{ex2sol}.

  \begin{figure}[h!]
    \caption{\label{ex2sol} Examples of numerical approximations to Airy's equation using the Fokas transform method. The parameters of the deformations are chosen as $\gamma=\min{\{\frac{1}{200 \sqrt[3]{t}}, 0.5\}}$, $\beta=9$ and $\theta_{max}=50$.}
    \centering
    \begin{subfigure}[t]{0.5\textwidth}
      \centering
      \includegraphics[height=0.7\textwidth,keepaspectratio=true]{./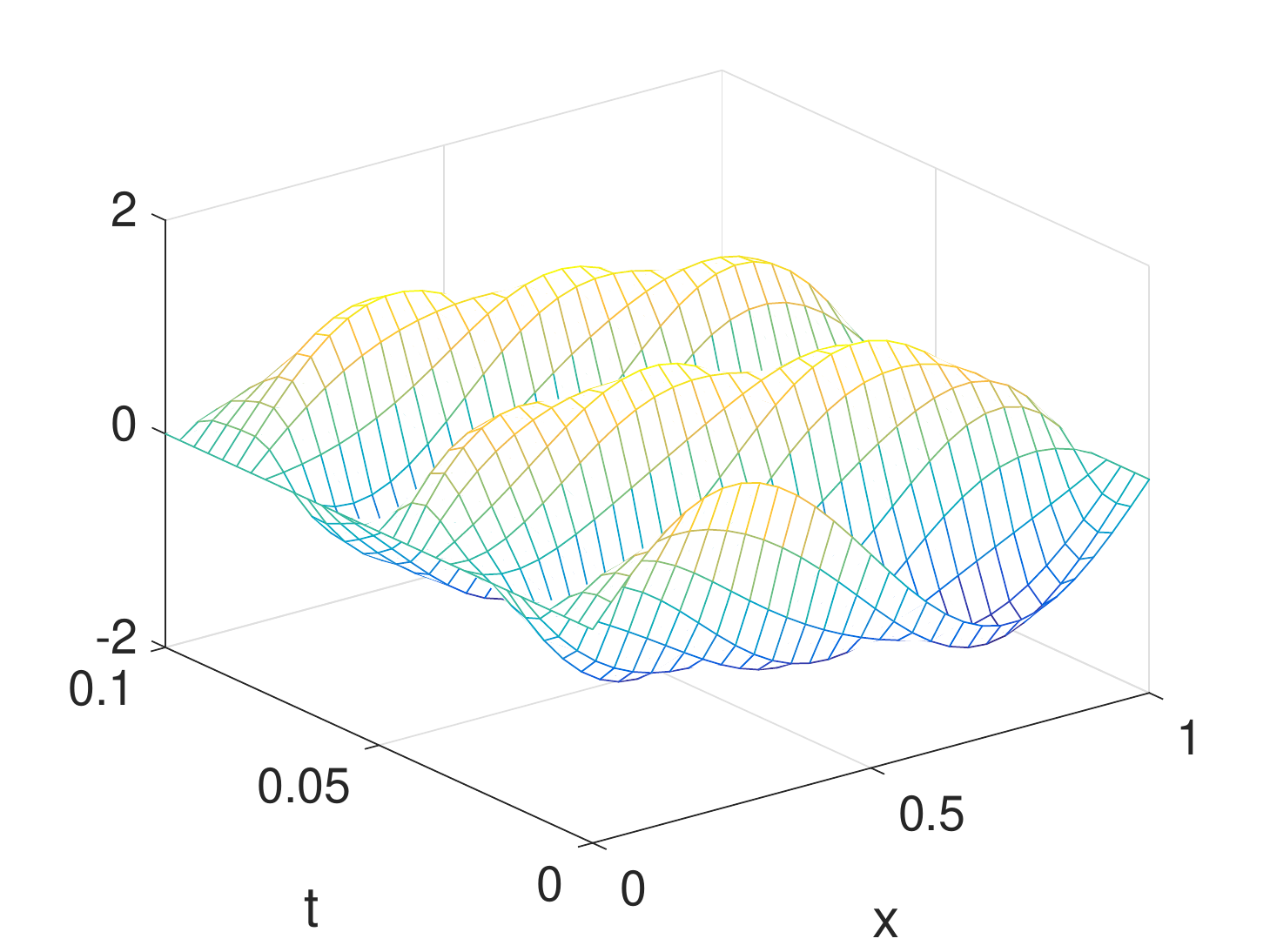}
      \caption{\label{fig:alpha1fig1} The solution given by the initial conditions (\ref{eq:alpha1eq1}) for $\alpha=1$.}
    \end{subfigure}%
    \hfill
    \begin{subfigure}[t]{0.5\textwidth}
      \centering
      \includegraphics[height=0.7\textwidth,keepaspectratio=true]{./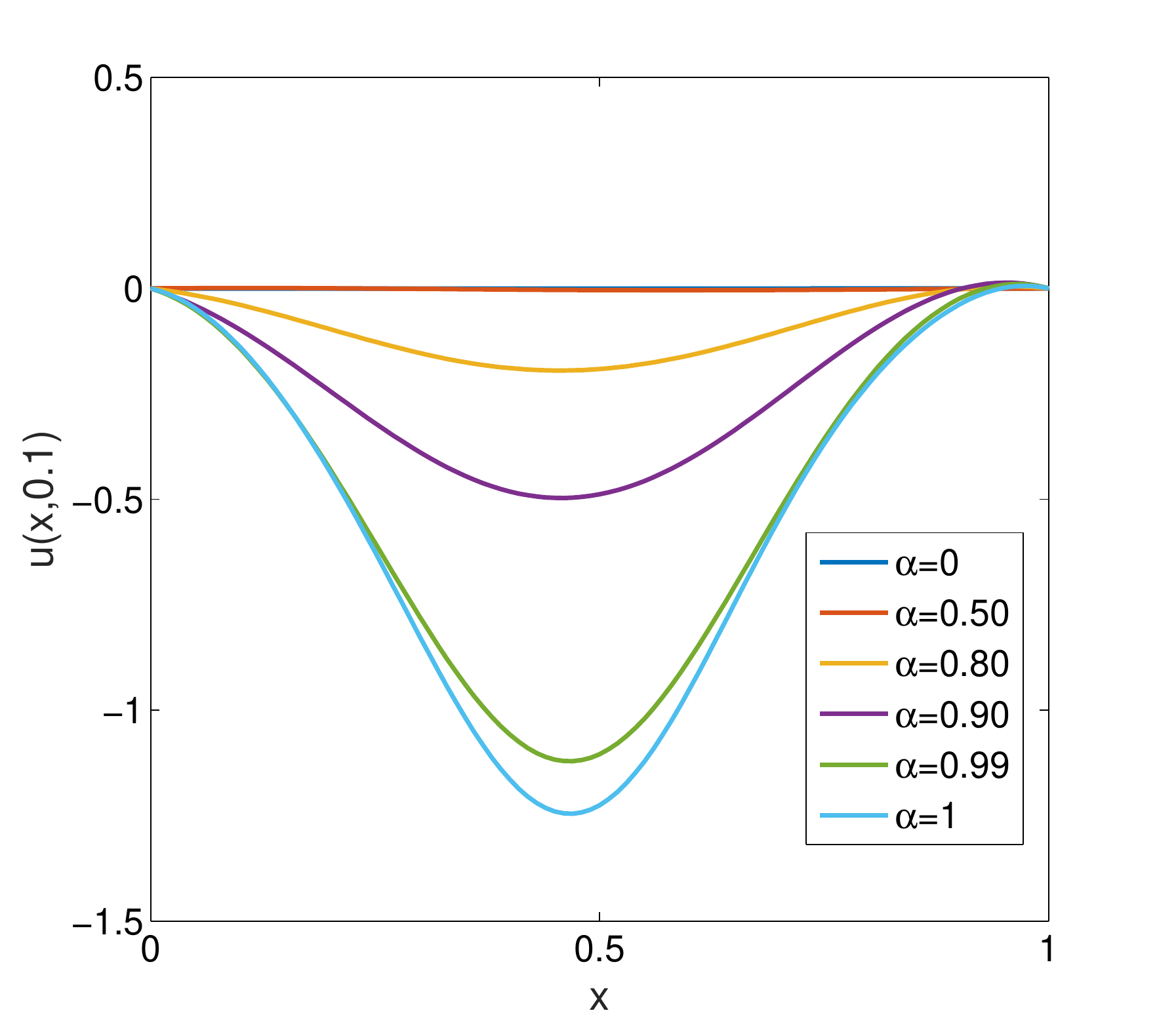}
      \caption{\label{fig:alpha1fig2} The solution given by the initial conditions (\ref{eq:alpha1eq1}) sliced at $t=0.1$ for various values of $\alpha$.}
    \end{subfigure}%
  \end{figure}
\end{example}

\section{Conclusions}
We have conducted a careful numerical evaluation of the solution of a variety of boundary value problems for the third-order PDE (\ref{lkdv}), and shown how this evaluation is extremely sensitive to the choice of integration contour. We also demonstrated that the optimal strategy for choosing the contour depends on the particular boundary conditions, namely it depends on the location of the poles of the integrand,  and on how many of these poles are included - which is equivalent to the residue at the pole being included in the final result of the computation. Note that these poles correspond to the eigenvalues of the spatial linear differential operator defined by the PDE and the boundary conditions.

Owing to the fast exponential decay of the integrand,  in every case only a few poles need be included. This should be contrasted with the slow convergence of series representations of the solutions of such problems when such a representation exists. The present numerical strategy is not only the only general one in existence, to our knowledge, for third-order 2-point boundary value problems, but also a competitive one for evaluating the solution of more classical 2-point boundary value problems for well known equations such as the heat equation. We have not given a rigorous proof that our choice is optimal, and we do not claim it to be. The rigorous study of the optimal strategy for choosing the deformation is a fundamental and interesting mathematical question, but it  is outside the scope of the present study.

The ability to conduct such numerical computations enables the study of the behaviour of the solution of such problems in the case that the initial condition is a piecewise constant function. This is the case studied by Olver in the case of periodic boundary conditions, and displaying the phenomenon of {\em dispersive quantisation}.  For general boundary conditions, the eigenvalues cannot be computed explicitly as they are the roots of a transcendental equation. Therefore it is not possible to verify analytically by an analogous computation to the one performed by Olver in his paper whether dispersive quantisation is expected to occur. A numerical study of the phenomenon is made possible by the results presented in this paper, and will be undertaken in subsequent work.


\vspace*{30pt}

\bibliographystyle{elsarticle-num}
\bibliography{References}

\end{document}